\newtheorem{theorem}{Theorem}[section]
\newtheorem{lemma}[theorem]{Lemma}
\newtheorem{corollary}[theorem]{Corollary}
\newtheorem{definition}{Definition}[section]
\newtheorem{proposition}[theorem]{Proposition}
\algrenewcommand\algorithmicrequire{\textbf{Input:}}
\algrenewcommand\algorithmicensure{\textbf{Output:}}
\newcommand{\dG}{\overrightarrow{G}}
\DeclareMathOperator{\lca}{lca}
\DeclareMathOperator{\parent}{parent}
\DeclareMathOperator{\child}{child}
\DeclareMathOperator{\cL}{cL}
\journal{Discrete Mathematics}
\begin{document}

\begin{frontmatter}

\title{Characterizations of undirected 2-quasi best match graphs}

\author[lpz]{Annachiara Korchmaros\corref{cor}} 
\ead{annachiara@bioinf.uni-leipzig.de}
\cortext[cor]{corresponding author}

\author[nwh]{Guillaume E. Scholz}

\affiliation[lpz]{organization={Bioinformatics Group, Department of Computer Science \& Interdisciplinary Center for Bioinformatics, Universit\"at Leipzig},
            addressline={H\"aertelstr. 16-18}, 
            city={Leipzig},
            postcode={D-04107}, 
            state={Sachsen},
            country={Germany}}

\affiliation[nwh]{organization={Independent},
            city={Leipzig},
            state={Sachsen},
            country={Germany}}

\author[lpz,MIS,TBI,BOG,SFI]{Peter F.\ Stadler}

\affiliation[MIS]{organization={Max Planck Institute for Mathematics in the
    Sciences}, addressline={Inselstra{\ss}e 22}, postcode={D-04103},
  city={Leipzig}, country={Germany}}

\affiliation[TBI]{organization={Institute for Theoretical Chemistry,
    University of Vienna}, addressline={W{\"a}hringerstrasse 17},
  postcode={A-1090}, city={Wien}, country={Austria}}

\affiliation[BOG]{organization={Facultad de Ciencias, Universidad Nacional
    de Colombia, Sede Bogot{\'a}}, country={Colombia}}

\affiliation[SFI]{organization={Santa Fe Institute}, addressline={1399 Hyde
    Park Rd.}, city={Santa Fe}, state={NM}, postcode={87501}, country={USA}}

\begin{abstract}
  Bipartite best match graphs (BMG) and their generalizations arise in
  mathematical phylogenetics as combinatorial models describing
  evolutionary relationships among related genes in a pair of species. In
  this work, we characterize the class of \emph{undirected 2-quasi-BMGs}
  (un2qBMGs), which form a proper subclass of the $P_6$-free chordal
  bipartite graphs. We show that un2qBMGs are exactly the class of
  bipartite graphs free of $P_6$, $C_6$, and the eight-vertex Sunlet$_4$
  graph. Equivalently, a bipartite graph $G$ is un2qBMG if and only if
  every connected induced subgraph contains a ``heart-vertex'' which is
  adjacent to all the vertices of the opposite color. We further provide a
  $O(|V(G)|^3)$ algorithm for the recognition of un2qBMGs that, in the
  affirmative case, constructs a labeled rooted tree that ``explains'' $G$.
  Finally, since un2qBMGs coincide with the $(P_6,C_6)$-free bi-cographs,
  they can also be recognized in linear time.
\end{abstract}

\begin{keyword}
  chordal bipartite graph \sep
  forbidden induced subgraphs \sep
  sunlet graphs \sep
  recognition algorithm \sep
  mathematical phylogenetics
\end{keyword}

\end{frontmatter}

\section{Introduction}

Best match graphs (BMGs) provide a combinatorial abstraction of the
evolutionary relationship ``\emph{being a closest relative}''. BMGs
naturally arise in comparative genomics, where they describe the
relatedness of homologous genes in different species. Given a leaf-labeled
tree $(T,\sigma)$ with leaf-coloring $\sigma$ representing the species in
which a gene (leaf) resides, there is a directed edge $x \to y$ in the
associated BMG whenever $y$ is the \emph{best match} of $x$ among all
leaves of color $\sigma(y)$. More precisely, $y$ is a best match for
$\sigma(y)$ if $\lca(x,y)$ equals or is a descendant of $\lca(x,z)$ for any
other leaf $z$ of the same color as $y$.  The resulting vertex-colored
digraph BMG$(T,\sigma)$ reflects part of the ancestral structure of $T$. It
has been studied extensively from graph-theoretic and algorithmic
viewpoints~\cite{geiss2019best,schaller2021corrigendum,
  schaller2021best,schaller2021heuristic}; see Section~\ref{sec:pre} for
formal definitions. This approach has recently been used to develop
software for estimating phylogenies of trees and orthology relations from
sequence data~\cite{ramirez2024revolutionh,ramirez2025revolutionh}.

Quasi-best match graphs (qBMGs) have been introduced as a generalization of
BMGs~\cite{korchmaros2023quasi}. Similar to BMGs, they derive from
leaf-colored tree $(T,\sigma)$ together with a \emph{truncation map} $u_T$
that assigns to each leaf $x$ and color $s \neq \sigma(x)$ a vertex on the
path from the root to $x$; see Section~\ref{sec:pre} for formal
definitions. This map determines how far the search for the best matches
extends toward the root.  The resulting digraph qBMG$(T,\sigma,u_T)$ thus
encodes edges based on partial ancestry information rather than the
complete tree topology.  This generalization preserves many structural and
algorithmic properties of BMGs, including polynomial-time tree
reconstruction, while capturing a broader range of leaf-colored digraphs.
In contrast to BMGs, qBMGs form a hereditary family of graphs. In
biological terms, qBMGs provide a combinatorial framework for evolutionary
relationships that depend on a limited ancestral depth, such as when genes are so distant that evolutionary relatedness cannot be determined
unambiguously.

If $\sigma$ takes two colors, the corresponding \emph{2-quasi best match graphs} (2qBMGs) are of remarkable interest in structural graph theory, not only for the already mentioned hereditary property but also for their rich combinatorial behavior, such as large automorphism groups~\cite{Korchmaros2026}, and finite characterizing 
set of forbidden subgraphs~\cite[Theorem~4.4]{schaller2021complexity}. The 2-colored BMGs are characterized as the sink-free  2qBMGs~\cite{korchmaros2023quasi}.

The \emph{undirected underlying graphs} of 2qBMGs, denoted \emph{un2qBMGs},
were introduced in~\cite{korchmaros2025forbidden} as a bridge between the
directed and symmetric best-match relations. The subgraphs of a BMG
comprising all its symmetric edges (i.e.\ 2-cycles) are the reciprocal
BMGs, which are fundamental in the study of gene family
histories~\cite{hellmuth2020complexity}. From a graph-theoretic
perspective, it is natural to study the underlying undirected graphs of
BMGs and qBMGs, leveraging well-established results on undirected graphs to
gain new insights into 2qBMGs.

Another remarkable property for structural graph theory, shown in ~\cite{korchmaros2025forbidden}, is that un2qBMGs admit no paths
of length six ($P_6$-free) and are chordal bipartite, i.e., they admit no
cycles of length six or more. This places un2qBMGs among structured
subclasses of bipartite graphs, sharing properties with chordal graphs
while remaining non-trivial. Recognition and optimization problems for
$P_6$-free chordal bipartite graphs (also known as $(P_6,C_6)$-free
bipartite) have been studied extensively since the seminal work
of~\cite{fouquet1999bipartite}.  The particular structure of
$(P_6,C_6)$-free graphs was found to be compatible with specific vertex
decompositions involving $K \oplus S$ graphs, where $K \oplus S$ denotes a
graph whose vertex set can be partitioned into a biclique $K$ and an
independent set $S$. In~\cite{quaddoura2024bipartite}, it is shown that a
bipartite graph $G$ is $(P_6,C_6)$-free if and only if every connected
subgraph of $G$ has a $K\oplus S$ vertex decomposition.  This structural
property enables the recognition of $(P_6,C_6)$-free bipartite graphs in
linear time~\cite{Takaoka:23}.  Since connected un2qBMGs also admit an $K
\oplus S$ vertex decomposition, it is natural to ask whether un2qBMGs can
be recognized in polynomial time.
  
The present paper provides a complete structural and algorithmic
characterization of un2qBMGs. In Section~\ref{sec:LRTs}, we define the
notion of a tree $(T,\sigma,u)$ that \emph{explains} a bipartite graph $G$
and show that $G$ is a un2qBMG if and only if $(T,\sigma,u)$ explains $G$.
We establish that every connected un2qBMG admits a \emph{least-resolved}
explaining tree, analogous to least-resolved trees of
BMGs~\cite{geiss2019best} and qBMGs~\cite{korchmaros2023quasi}. In
particular, in every least-resolved tree $(T,\sigma,u)$, each internal
vertex $v$ corresponds to an edge $xy$ of $G$ with $\lca_T(x,y)=v$, and
every associated subtree $T(v)$ is non-monochromatic.

In Section~\ref{sec:5vertices}, we then analyze the case of small instances
and bicliques, showing that all connected bipartite graphs with at most
five vertices are un2qBMGs; see Proposition~\ref{prop:small_cases}. In
Section~\ref{sec:algorithm}, we introduce the \textsc{HEART-TREE}
algorithm, which iteratively identifies \emph{heart-vertices}, i.e.,
vertices adjacent to all vertices of the opposite color, and uses them to
reconstruct a least-resolved explaining tree. Theorem~\ref{th:algoworks} shows that \textsc{HEART-TREE} decides in
cubic time whether a bipartite graph is a un2qBMG, and, if so, returns an
associated least-resolved tree. The key structural insight is that every
connected un2qBMG contains a heart-vertex, see Lemma~\ref{lm:vheart}, and that
the class of un2qBMGs is hereditary with respect to induced
subgraphs~\cite{korchmaros2025forbidden}.  Hence, one of the
characterizations of un2qBMGs in Theorem~\ref{th:5eq} is that a bipartite
graph is a un2qBMG if and only if every connected induced subgraph contains
a heart-vertex.

In~\cite[Theorem 3.4]{korchmaros2025forbidden}, the authors showed that
un2qBMGs are strictly contained in the class of $(P_6,C_6)$-free bipartite
graphs by providing a third forbidden subgraph $H$ for the class of
un2qBMGs. However, forbidden subgraphs contained in $H$ were not
excluded. In Section~\ref{sec:forbidden}, we study this problem and
characterize un2qBMGs as the bipartite graphs free of induced $P_6$, $C_6$,
and $\mathrm{Sunlet}_4$ subgraphs; see Theorem~\ref{th:5eq}.

\section{Preliminaries}\label{sec:pre}

\paragraph{Undirected and directed graphs}
Our notation and terminology for undirected graphs are standard.  Let $G$
be an undirected graph with vertex-set $V(G)$ and edge-set $E(G)$, and let
$\overrightarrow{G}$ be a directed graph with vertex-set $V(\dG)$ and
edge-set $E(\dG)$. For a graph $G$, the undirected edge $x-y$ is denoted as
$xy$, and the \emph{degree} of a vertex $x$ in $G$ is the number of
vertices $y \in V(G)$ such that $xy \in E(G)$. Similarly, the directed edge
$x \rightarrow y$ with head $x$ and tail $y$ is denoted as $xy$, and the
indegree and outdegree of a vertex $x \in V(\dG)$ is the number of vertices
$y \in V(\dG)$ such that $yx \in E(\dG)$ and $xy \in E(\dG)$,
respectively. In an undirected graph $G$, the \emph{degree} of a vertex
$x$, denoted $\deg_G(x)$, is the number of elements $y \in X$ such that $xy
\in E(G)$.

For an undirected graph $G$ and a subset $Y\subseteq V(G)$, the
\emph{subgraph of $G$ induced by $Y$}, denoted by $G[Y]$, is the graph with
vertex-set $Y$ and edge-set the set of edges of $G$ with both endpoints in $Y$.

A \emph{bipartite graph} $G$ is a graph whose vertices are partitioned into
two subsets (partition sets) such that the endpoints of every edge fall
into different subsets.  A \emph{bipartite digraph} $\dG$ if its underlying undirected graph is bipartite. The partition sets
may be viewed as the color classes of a proper vertex coloring $\sigma$ of
two colors in the sense that no edge is incident on vertices colored the
same color.

A path-graph $P_n$ with a path $v_1v_2\cdots v_{n}$ is a bipartite graph
with (uniquely determined) color sets $\{v_1,v_3,\ldots\}$ and
$\{v_2,v_4,\ldots\}$. An undirected graph is $P_n$-\emph{free} if it has no
induced subgraph isomorphic to a $P_n$ path-graph.  A \emph{cycle} $C_n$ of
an undirected graph $G$ is a sequence $v_1v_2\cdots v_n$ of pairwise
distinct vertices of $G$ such that $v_iv_{i+1}\in E(G)$ for
$i=1,\ldots,n-1$, and $v_nv_1\in E(G)$.  A $C_n$ \emph{cycle-graph} is an
undirected graph on $n$ vertices with a cycle $v_1v_2\cdots v_nv_1$
containing no chord, i.e. any edge in $E(G)$ is either $v_iv_{i+1}\in E(G)$
for some $1\le i \le n-1$, or $v_nv_1$. For $n$ even, a $C_n$ cycle-graph
with a cycle $v_1v_2\cdots v_nv_1$ containing no chord is a bipartite graph
with (uniquely determined) color classes $\{v_1,v_3,\ldots\}$ and
$\{v_2,v_4, \ldots\}$. Bipartite cycle-graphs can only exist for even
$n$. An undirected graph is $C_n$-\emph{free} if it has no induced subgraph
isomorphic to an $C_n$ cycle-graph. The \emph{sunlet graph}
$\text{Sunlet}_n$, for $n \geq 3$, is the graph obtained by attaching a
single pendant vertex to each vertex of the cycle-graph $C_n$. For $n=4$,
the sunlet graph is shown in Figure~\ref{fig:sunlet4}.
\begin{figure}[ht]
  \centering
\begin{tikzpicture}[x=1cm, y=1cm, rotate=45,
    fulldot/.style={circle, fill=black, inner sep=1.4pt},
    emptydot/.style={circle, draw=black, fill=white, inner sep=1.4pt}
  ]
  \node[fulldot]   (v1) at (0:1)    {};
  \node[emptydot]  (v2) at (90:1)   {};
  \node[fulldot]   (v3) at (180:1)  {};
  \node[emptydot]  (v4) at (270:1)  {};
  
  \node[emptydot]  (u1) at (0:1.5)   {};
  \node[fulldot]   (u2) at (90:1.5)  {};
  \node[emptydot]  (u3) at (180:1.5) {};
  \node[fulldot]   (u4) at (270:1.5) {};
  
  \draw (v1) -- (v2) -- (v3) -- (v4) -- (v1);
  
  \draw (v1) -- (u1);
  \draw (v2) -- (u2);
  \draw (v3) -- (u3);
  \draw (v4) -- (u4);
\end{tikzpicture}
  \caption{$\text{Sunlet}_4$ with vertex bipartion induced by the
    vertex-coloring.}
  \label{fig:sunlet4}
\end{figure}
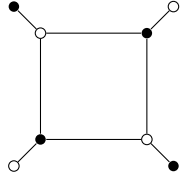

\paragraph{Rooted Trees}

A directed graph $T$ is a tree if all vertices of $T$ have indegree $0$ or
$1$. A tree $T$ is \emph{rooted} if exactly one vertex of $T$ has indegree
$0$. In this case, the vertex is the \emph{root} of $T$, denoted
$\rho_T$. In a rooted tree, its root is typically drawn as the top, and the
edges are directed from the parent vertices to their child vertices. A
vertex $v$ of $T$ is a \emph{leaf} if it has outdegree $0$, and we denote
the set of leaves of $T$ by $L(T)$. A vertex of $T$ that is not a leaf is
called an \emph{internal vertex} of $T$. Similarly, an arc $vw$ of $T$ such
that $w$ is an internal vertex of $T$ is called an \emph{internal arc} of
$T$.

A tree $T$ is \emph{phylogenetic} if every internal vertex has at least two
children. In this contribution, all trees are rooted and phylogenetic. Let
$v,w \in V(T)$. Whenever we write $vw \in E(T)$, we assume $w$ is a child
of $v$, and $v$ is the parent of $w$ denoted by $\parent_T(w)$ or, when
clear from context $\parent(w)$. We write $\child_T(v)$ or, when clear from
context, $\child(v)$ for the set of children of $v$ in $T$, and $\cL(v)$
for the set of children of $v$ that are leaves, that is, $cL(v)=\child_T(v)
\cap L(T)$.

\paragraph{Quasi-best match graphs}
Let $(T,\sigma)$ be a leaf-colored tree with leaf-coloring $\sigma:
L\rightarrow \sigma(L(T))$. For any two leaves $x,y\in V$, $\lca(x,y)$
denotes the last common ancestor between $x$ and $y$ on $T$.  A leaf $y \in
L$ is a best match of the leaf $x\in L$ if $\sigma(x)\ne\sigma(y)$ and
$\lca(x,y) \preceq \lca(x,z)$, i.e. $\lca(x,z)$ is an ancestor of
$\lca(x,y)$, holds for all leaves $z$ of color $\sigma(y) =\sigma(z)$. The
{\emph{BMG (best match graph) explained by}} $(T,\sigma)$ is the
vertex-colored digraph $BMG(T,\sigma)$ with color-set $\sigma(L(T))$,
vertex-set $L(T)$, and $xy\in E(G)$ if $y$ is a best match of $x$ in
$(T,\sigma)$.
\begin{figure}
    \centering
    \begin{tikzpicture}[>={Stealth[bend]},x=1cm,y=1cm,bullet/.style={fill,circle,inner sep=1.4pt}, scale=0.7]
\begin{scope}[nodes=bullet,scale=0.4]
   \node[label=below:{ $x_2$}, fill=black] (b) at (4,0) {};
   \node[label=below:{ $x_4$}, fill=black] (d) at (12,0) {};
   \node[label=below:{ $x_3$}, fill=white,draw=black] (c) at (8,0) {};
   \node[label=below:{ $x_1$} , fill=white,draw=black] (a) at (0,0) {};
   \node[label=left:{ $\lca(x_1,x_2)$}] (e) at (2,4) {};
   \node[label=right:{ $\lca(x_3,x_4)$}] (f) at (10,4) {};
   \node[label=above:{ $\rho$}] (h) at (6,8) {};
 \end{scope}
\begin{scope}[ every edge/.style={draw=black},scale=0.3]
   \path [-, thick] 
    (a) edge[bend left=0] (e)
    (b) edge[bend left=0] (e)
    (c) edge[bend left=0] (f)
    (d) edge[bend left=0] (f)
    (e) edge[bend left=0] (h)
    (f) edge[bend left=0] (h); 
\end{scope}
\end{tikzpicture}
\hspace{0.5cm}
\begin{tikzpicture}[>={Stealth[bend]},x=1cm,y=1cm,bullet/.style={fill,circle,inner sep=1.4pt}]
\begin{scope}[nodes=bullet,scale=0.4]
   \node[label=below:{ $x_4$}, fill=black] (b) at (4,0) {};
   \node[label=above:{ $x_1$}, fill=white, draw=black] (c) at (0,4) {};
   \node[label=below:{ $x_2$}, fill=black] (a) at (0,0) {};
   \node[label=above:{ $x_3$} , fill=white, draw=black] (d) at (4,4) {};
 \end{scope}
\begin{scope}[ every edge/.style={draw=black},scale=0.5]
\draw [-] (c) edge [->, thick] (a);
\draw [<-,gray,dashed,thick] (c.south) to [out=330,in=30] (a.north);
\draw [-] (d) edge [<->, thick] (b);
\end{scope}
\end{tikzpicture}
    \caption{Example of a 2qBMG and 2BMG explained by a rooted phylogenetic
      tree. Black edges represent the edge-set of the 2qBMG. Black and
      dashed edges form the edge-set of the 2BMG. The truncation map $u$ of
      the 2qBMG is $u(x_2)=x_2$ and $u(x)=\rho$ otherwise.}
    \label{fig:2qBMG_tree}
\end{figure}
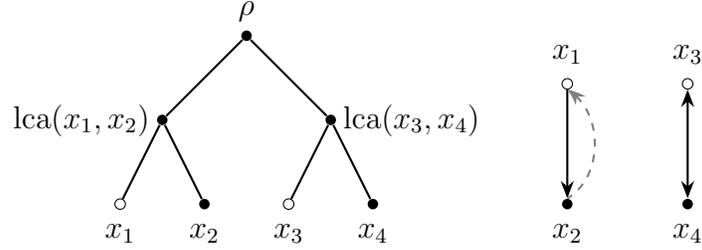
A truncation map $u_T: L(T)\times \sigma(L(T)) \rightarrow V(T)$ assigns to
every leaf $x \in L(T)$ and color $s \in \sigma(L(T))$ a vertex of $T$ such
that $u_T(x,\sigma(x))=x$ and $u_T(x,s)$ lies along the unique path from
$\rho_T$ to $x$ for $s\neq \sigma(x)$.  A leaf $y\in L(T)$ is a quasi-best
match for $x \in L(T)$ with respect to $(T,\sigma,u_T)$ if (i) $y\,$ is a
best match of $x$ in $(T,\sigma)$, and (ii) $\lca_T(x,y) \preceq
u_T(x,\sigma(y))$. The {\emph{qBMG (quasi-best match graph) explained by}}
$(T,\sigma,u_T)$ is the vertex-colored digraph $qBMG(T,\sigma,u_T)$ with
color-set $\sigma(L(T))$, vertex-set $L(T)$, and $xy\in E(G)$ if $y$ is a
quasi-best match of $x$ in $(T,\sigma,u_T)$. A vertex-colored digraph
$(\dG,\sigma)$ is a {\emph{$|S|$-colored quasi-best match
  graph}}($|S|$-qBMG) if there is a leaf-colored tree $(T,\sigma)$ together
with a truncation map $u_T$ on $(T,\sigma)$ such that $V(G)=L(T),
|S|=|\sigma(L(T))|,$ and $\dG= |S|$-qBMG($T,\sigma,u_T$).  In particular,
BMGs are qBMGs when $u_T(x,s)=\rho_T$ for $s\neq \sigma(x)$. For general
results on quasi-best match graphs, the reader is referred
to~\cite{korchmaros2023quasi}.

A leaf-colored tree $(T, \sigma,u )$ explaining a qBMG $\dG$ is
\emph{least-resolved} if there is no $(T,\sigma,u')$ explaining $\dG$ such
that $T'$ can be obtained from $T$ by a sequence of edge-contractions. BMGs
are explained by a unique least-resolved tree that can be built in
polynomial time~\cite{geiss2019best}. This property has recently been used
to build parsimonious explanations of a family of
genes~\cite{ramirez2024revolutionh}. On the other hand, qBMGs do not have a
unique least resolved tree, but a phylogenetic tree that explains a qBMG
can be built in polynomial time~\cite{korchmaros2023quasi}.

In this contribution, we are interested in 2qBMGs, that is, $|\sigma(L(T))|=2$. In this case, $u(x)$ denotes the image of a vertex $x$ under the truncation map $u$, assuming that $s \neq \sigma(x)$.
Figure~\ref{fig:2qBMG_tree} shows an example of four vertices.

\section{Basic properties of un2qBMGs and least resolved trees}
\label{sec:LRTs}

The study of \emph{undirected underlying graph of a 2qBMG (un2qBMG)} was initiated 
in~\cite{korchmaros2025forbidden}. In the latter, it was shown that
un2qBMGs do not have any induced path of size at least $6$, and they are
chordal bipartite~\cite[Corollary 3.3]{korchmaros2025forbidden},
i.e., un2qBMGs do not admit any induced cycle of length at least $6$. The
tree $(T,\sigma,u_T)$ explains the graph $G$ if $G$ is the underlying
undirected graph of 2qBMG($T,\sigma,u_T$).

\begin{definition}\label{def:un2qbmg-tree}
  Let $T$ be a phylogenetic tree with a binary leaf-coloring $\sigma$ and
  the truncation map $u$ such that $u(x)\in \{x,\rho\}$ for all $x \in
  L(T)$. A graph $G$ is \emph{explained} by $(T,\sigma, u)$ if the
  following two properties hold:
  \begin{itemize}
  \item[(i)] $V(G)=L(T)$;
  \item[(ii)] $xy\in E(G)$ whenever $\sigma(x) \neq \sigma(y)$ and
    \begin{itemize}
    \item [(ii.1)] $u(x)\neq x$ and $y \preceq \lca(x,z)$ for all $z \in
      L(T)$ with $\sigma(z)=\sigma(y)$, or
    \item [(ii.2)] $u(y)\neq y$ and $x \preceq \lca(y,z)$ for all $z \in
      L(T)$ with $\sigma(z)=\sigma(x)$.
    \end{itemize} 
  \end{itemize}
\end{definition}
Any graph $G$ satisfying Definition~\ref{def:un2qbmg-tree} is necessarily
bipartite, since condition~(ii) requires that $\sigma(x) \neq \sigma(y)$
for every edge $xy \in E(G)$.

Recall that qBMGs are directed graphs explained by a tree. In analogy,
un2qBMGs are precisely the undirected graphs explained by a tree, as stated
in the following result.
\begin{proposition}\label{prop:iff_explaing-tree}
  Let $(G,\sigma)$ be an undirected graph. $G$ is un2qBMG if and only if
  there is a tree $(T,\sigma,u)$ explaining $G$.
\end{proposition}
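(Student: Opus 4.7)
The plan is to prove both directions of the equivalence via a single structural observation specific to the two-color case.

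The key observation is the following: if $y_1,y_2$ are two best matches of $x$ in $(T,\sigma)$ of the same color $s \neq \sigma(x)$, then $\lca_T(x,y_1) = \lca_T(x,y_2)$; call this common value $v_x$. Hence, for any truncation map $u$, the quasi-best-match condition $\lca_T(x,y) \preceq u(x)$ depends on $u(x)$ only through whether $v_x \preceq u(x)$. In other words, $u(x)$ acts as a binary switch: either all best matches of $x$ of the opposite color survive as quasi-best matches, or none do. Since $y$ is a leaf of color different from $\sigma(x)$, the vertex $v_x$ is a strict ancestor of $x$, so this dichotomy is realized exactly by the restricted choice $u(x) \in \{x,\rho_T\}$ imposed in Definition~\ref{def:un2qbmg-tree}.

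For the forward implication ($\Rightarrow$), suppose $G$ is a un2qBMG, so $G$ is the underlying undirected graph of $2\text{qBMG}(T,\sigma,u_T)$ for some $(T,\sigma,u_T)$. Define $u' \colon L(T) \to V(T)$ by $u'(x) = \rho_T$ whenever $v_x \preceq u_T(x)$, and $u'(x) = x$ otherwise. The observation guarantees that $2\text{qBMG}(T,\sigma,u') = 2\text{qBMG}(T,\sigma,u_T)$, so the underlying graph is still $G$, and by construction $u'(x) \in \{x,\rho_T\}$. It remains to verify that $(T,\sigma,u')$ satisfies Definition~\ref{def:un2qbmg-tree}: the condition ``$y \preceq \lca_T(x,z)$ for all $z$ of color $\sigma(y)$'' is just the best-match condition $\lca_T(x,y) \preceq \lca_T(x,z)$ rewritten (using that $y$ is a leaf), and ``$u'(x) \neq x$'' is equivalent to ``$u'(x) = \rho_T$'', i.e.\ no truncation occurs from $x$. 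Hence (ii.1) captures exactly the arc $x \to y$ of the 2qBMG and, symmetrically, (ii.2) captures the arc $y \to x$. The edge $xy$ lies in the underlying graph iff at least one of these arcs exists, matching the conclusion of Definition~\ref{def:un2qbmg-tree}(ii).

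For the backward implication ($\Leftarrow$), given any $(T,\sigma,u)$ with $u(x)\in\{x,\rho_T\}$ explaining $G$ in the sense of Definition~\ref{def:un2qbmg-tree}, the map $u$ is a valid truncation map since both $x$ and $\rho_T$ lie on the path from $\rho_T$ to $x$. The correspondence described above shows that the arcs of $2\text{qBMG}(T,\sigma,u)$ are precisely the ordered pairs $(x,y)$ satisfying (ii.1), and consequently its underlying undirected graph equals $G$; thus $G$ is un2qBMG. There is no substantial obstacle once the binary-switch observation is isolated; the remaining work is routine bookkeeping aligning Definition~\ref{def:un2qbmg-tree} with the definitions of the 2qBMG and of its underlying undirected graph.
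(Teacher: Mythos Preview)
Your proof is correct and follows the same overall strategy as the paper's: in both directions one relates the arcs of the directed graph $2\mathrm{qBMG}(T,\sigma,u)$ to conditions (ii.1) and (ii.2) of Definition~\ref{def:un2qbmg-tree}, and then observes that the underlying undirected edge $xy$ corresponds to the disjunction of the two.

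The one genuine difference is your treatment of the restriction $u(x)\in\{x,\rho_T\}$ built into Definition~\ref{def:un2qbmg-tree}. In the forward direction the paper takes a tree $(T,\sigma,u)$ explaining the directed graph $\dG$ with an \emph{arbitrary} truncation map $u$ and asserts that the very same triple explains $G$, checking only that $u(x)\neq x$ when an out-arc from $x$ exists; the passage from a general $u$ to one valued in $\{x,\rho_T\}$ is left implicit. Your binary-switch observation---that in the two-color case all best matches of $x$ share the common value $v_x=\lca_T(x,y)$, so the truncation condition $\lca_T(x,y)\preceq u(x)$ depends only on whether $v_x\preceq u(x)$---is exactly what justifies replacing an arbitrary $u$ by the restricted $u'$ without altering the 2qBMG. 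This makes explicit why the special form of $u$ in Definition~\ref{def:un2qbmg-tree} entails no loss of generality, and in doing so your argument also handles cleanly the ``only if'' direction of condition~(ii) that the paper's verification treats somewhat loosely.
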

\begin{proof}
  Assume first that $G$ is un2qBMG. Let $\dG$ be a 2qBMG such that $G$ is
  its underlying undirected graph. Consider a tree $(T,\sigma,u)$
  explaining $\dG$. We show that $(T,\sigma,u)$ also explains $G$. By
  definition, $V(G)=V(\dG)=L(T)$, so (i) in Definition~\ref{def:un2qbmg-tree}
  holds. To see that (ii) in Definition~\ref{def:un2qbmg-tree} holds, let
  $xy\in E(G)$. Since $G$ is the underlying undirected graph of $\dG$, one
  of $xy\in E(\dG)$ or $yx\in E(\dG)$ holds. Suppose that $xy\in
  E(\dG)$. Therefore, $y\preceq \lca(x,z)$ with $\sigma(z)=\sigma(y)$ and
  $\lca(x,y)\preceq u(x)$ yielding $u(x)\neq x$. Hence, (ii.1) in
  Definition~\ref{def:un2qbmg-tree} is satisfied. If $yx\in E(\dG)$ and
  the roles of $x$ and $y$ are exchanged, then the same argument implies
  that condition (ii.2) in Definition~\ref{def:un2qbmg-tree} is satisfied.
    
  Conversely, suppose that there is a tree $(T,\sigma,u)$ explaining $G$,
  and consider the 2qBMG $\dG$ explained by $(T,\sigma,u)$. Clearly,
  $V(\dG)=L(T)=V(G)$ holds. It remains to show that for all $xy\in E(G)$,
  one of $xy \in E(\dG)$ or $yx \in E(\dG)$. So, let $xy \in E(G)$. If (ii.1) in
  Definition~\ref{def:un2qbmg-tree} is satisfied for $xy$, then
  $u(x)\neq x$ must hold, so $u(x)=\rho$ and $\lca(x,y)\preceq u(x)$
  follows. This, together with the fact that $y \preceq (x,z)$ for all $z
  \in X$ with $\sigma(z)=\sigma(y)$, implies that $xy\in E(\dG)$. One
  easily verifies that analogous arguments imply $yx \in E(\dG)$ if case
  (ii.2) in Definition~\ref{def:un2qbmg-tree} is satisfied.
  \end{proof}

\begin{lemma}\label{lm:union_un2qbmg}
  The disjoint union $(G, \sigma) = \bigcup_{i=1,\ldots m}(G_i, \sigma_i)$
  with $|\sigma_i(G_i)|=2$ is an un2qBMG if and only if each $(G_i,
  \sigma_i)$ is an un2qBMG.
\end{lemma}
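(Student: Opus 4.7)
The plan is to apply Proposition~\ref{prop:iff_explaing-tree} in both directions, translating the statement into one about existence of explaining trees.

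For the \emph{only if} direction, each $G_i$ is simply the induced subgraph $G[V(G_i)]$, so I would invoke the fact recalled in the introduction (from~\cite{korchmaros2025forbidden}) that the class of un2qBMGs is closed under induced subgraphs. The two-color hypothesis $|\sigma_i(G_i)|=2$ is not used here; it only tells us that each component is non-trivial.

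For the \emph{if} direction, starting from explaining trees $(T_i,\sigma_i,u_i)$ for each $G_i$, I would glue them by introducing a fresh root $\rho$ with children $\rho_{T_1},\dots,\rho_{T_m}$ and attaching each $T_i$ below $\rho_{T_i}$. The coloring $\sigma$ is the disjoint union of the $\sigma_i$, and the truncation map is defined leaf-by-leaf as $u(x)=\rho$ whenever $u_i(x)\neq x$, and $u(x)=x$ otherwise; this keeps $u(x)\in\{x,\rho\}$ as required by Definition~\ref{def:un2qbmg-tree}. The degenerate case $m=1$ is immediate, and $m\ge 2$ ensures that $T$ is phylogenetic.

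Two verifications then remain. Edges of each $G_i$ are preserved in $T$, because for $z\in L(T_i)$ one has $\lca_T(x,z)=\lca_{T_i}(x,z)$, while for $z\in L(T_j)$ with $j\neq i$ one has $\lca_T(x,z)=\rho$ and the condition $y\preceq \rho$ holds trivially; hence whichever of (ii.1) or (ii.2) witnessed the edge $xy$ inside $T_i$ continues to hold in $T$. The step I expect to be the main obstacle is ruling out spurious cross-component edges. For $x\in L(T_i)$ and $y\in L(T_j)$ with $i\neq j$ and $\sigma(x)\neq \sigma(y)$, the hypothesis $|\sigma_i(G_i)|=2$ supplies a leaf $z\in L(T_i)$ of color $\sigma(y)$; then $\lca_T(x,z)\in V(T_i)$ is not an ancestor of $y\in L(T_j)$, so $y\not\preceq \lca_T(x,z)$ and (ii.1) fails. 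A symmetric witness inside $L(T_j)$, using $|\sigma_j(G_j)|=2$, rules out (ii.2). Without the two-color hypothesis on every component, this argument collapses and the gluing could fabricate edges across components, which is precisely what makes this the delicate point of the proof.
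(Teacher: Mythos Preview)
Your proof is correct and follows the same route as the paper: hereditariness for the forward direction, and gluing the $(T_i,\sigma_i,u_i)$ under a fresh root for the converse, with the two-color hypothesis used exactly to block spurious cross-component edges. You are in fact more careful than the paper in normalizing $u$ to land in $\{x,\rho\}$; the one routine check you (and the paper) leave implicit is that within-component non-edges remain non-edges, which is immediate from $\lca_T=\lca_{T_i}$ on $L(T_i)$.
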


\begin{proof}
Suppose $G$ is an un2qBMG. Since the property of being an un2qBMG is
hereditary~\cite{korchmaros2025forbidden}, every $G_i$ is an un2qBMG.
Conversely, assume that $G$ is not an un2qBMG, but every $G_i$ is an
un2qBMG. Proposition~\ref{prop:iff_explaing-tree} ensures that every $G_i$
is explained by a tree $(T_i,\sigma_i,u_i)$. Then, consider a star-tree $T$
with $x_1,\ldots,x_m$ leaves. Modify $T$ by rooting $T_i$ in $x_i$, and
define a truncation map $u$ on $L(T)$, as the piecewise function of
$u_1,\ldots, u_n$. Definition~\ref{def:un2qbmg-tree} togheter with the
assumption that $|\sigma_i(G_i)|=2$ for every element of the union gives
that $G$ is explained by $(T,\sigma,u)$.
\end{proof}

In this contribution, we will first prove our main results assuming that
$(G,\sigma)$ is a connected properly vertex-colored graph, and then use
Lemma~\ref{lm:union_un2qbmg} to extend the results to the case of
disconnected graphs.

For $v$ a vertex of $T$, we say that $T(v)$ is \emph{monochromatic} if all
leaves of $T(v)$ have the same color. We have:

\begin{proposition}\label{prop:lrt_lca}
  Let $G$ be a connected un2qBMG explained by the tree $(T,\sigma,u)$. If
  $(T,\sigma,u)$ is least-resolved, then for every internal vertex $v$ of
  $T$, there exists an edge $xy \in E(G)$ such that $v=\lca_T(x,y)$.
\end{proposition}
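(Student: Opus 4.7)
The plan is to argue by contrapositive: assume that $(T,\sigma,u)$ explains $G$ and some internal vertex $v$ fails to be $\lca_T(x,y)$ for any $xy \in E(G)$, then exhibit a proper edge-contraction of $T$ whose result still explains $G$, directly contradicting least-resolution.

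First I dispose of the root case. If $v = \rho_T$, then no edge of $G$ can connect leaves lying in distinct subtrees $T(c), T(c')$ for children $c \neq c'$ of $\rho_T$, since any such edge would have $\lca_T = \rho_T = v$. Because $T$ is phylogenetic, $\rho_T$ has at least two children, each rooting a subtree containing at least one leaf, so $G$ would decompose as a disjoint union of at least two nonempty induced subgraphs, contradicting the connectedness of $G$.

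For $v \neq \rho_T$, set $p := \parent_T(v)$ and let $T'$ be the tree obtained from $T$ by contracting the edge $pv$, reattaching the children of $v$ directly under $p$; keep $\sigma$ and $u$ unchanged (the leaves and the root $\rho_T$ are preserved). The effect on LCAs is transparent: $\lca_{T'}(a,b) = \lca_T(a,b)$ unless $\lca_T(a,b) = v$, in which case $\lca_{T'}(a,b) = p$. In particular $\lca_T(a,b) \preceq \lca_{T'}(a,b)$ for all leaves $a,b$. The easy direction of ``$(T',\sigma,u)$ explains $G$'' follows immediately: if $xy \in E(G)$ satisfies (ii.1) in $T$, then $y \preceq \lca_T(x,z) \preceq \lca_{T'}(x,z)$ for every $z$ of color $\sigma(y)$, so (ii.1) still holds in $T'$; the argument for (ii.2) is symmetric.

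The main obstacle is the converse direction, namely ruling out spurious new edges in the graph explained by $T'$. Suppose some pair $xy$ satisfies (ii.1) in $T'$ but fails it in $T$. Then there must exist a leaf $z$ with $\sigma(z) = \sigma(y)$ for which the LCA strictly changes under contraction, i.e. $\lca_T(x,z) = v$, and moreover $y \notin T(v)$ (otherwise $y \preceq \lca_T(x,z) = v$ would already hold in $T$). I would then argue that $xz$ is itself an edge of $G$ with $\lca_T(x,z) = v$, contradicting the hypothesis on $v$. Concretely, (ii.1) for $xy$ in $T'$ gives $y \preceq \lca_{T'}(x,z')$ for every $z'$ of color $\sigma(y)$; since $y \in T(p) \setminus T(v)$, every common ancestor of $x$ and $y$ in $T'$ lies at or above $p$, so $\lca_{T'}(x,z') \succeq p$, which translates back to $\lca_T(x,z') \succeq v$. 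Combined with $z \preceq v$ this yields $z \preceq \lca_T(x,z')$ for every such $z'$, and since $u(x) \neq x$ is inherited from the hypothesis on $xy$, (ii.1) does hold for $xz$ in $T$. A symmetric analysis covers (ii.2), completing the contradiction.
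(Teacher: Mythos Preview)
Your proof is correct and follows essentially the same approach as the paper: both handle $v=\rho_T$ via connectedness, and for $v\neq\rho_T$ contract the edge above $v$ and verify that the explained graph is unchanged, reaching a contradiction by exhibiting an edge $xz\in E(G)$ with $\lca_T(x,z)=v$. The only minor organizational difference is that the paper invokes connectedness of $G$ in the non-root case to locate a best match $z$ of $x$ up front, whereas you extract $z$ as the witness to (ii.1) failing in $T$ and then derive $xz\in E(G)$ from the (ii.1) hypothesis in $T'$; both routes land on the same contradictory edge.
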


\begin{proof}
 By contradiction, suppose that there exists an internal vertex $v$ of $T$
  such that no edge $xy \in E(G)$ satisfies $v=\lca_T(x,y)$. We distinguish
  between two cases, according as $v=\rho$ and $v \neq \rho$.

  Suppose first that $v=\rho$, and let $v_1, \ldots, v_k$, $k \geq 2$ be
  the children of $v$. Observe that for every $x\in L(v_i)$ and $y\in
  L(v_j)$ with $i\neq j$, $\lca(x,y)=\rho$. From our assumption on $v$,
  there is no path between $x$ and $y$ in $G$ when
  $\sigma(x)\neq\sigma(y)$. This contradicts $G$ being connected.

  Suppose now that $v\neq \rho$. Denote by $w$ be the parent of $v$, and by
  $T'$ the tree obtained from $T$ by contracting the arc $wv$. We show that
  $G=G'$, where $G'$ is explained by $(T',\sigma,u)$, contradicting the
  assumption that $(T,\sigma,u)$ is least-resolved. Observe that
  $V(G)=V(G')$ by Definition~\ref{def:un2qbmg-tree}(i). Hence, we proceed
  to show that $E(G)=E(G')$. For every $xy\in E(G)$, we may assume that
  $u(x) \neq x$, and $y \preceq_T \lca_T(x,z)$ for all $z \in V(G)$ with
  $\sigma(z)=\sigma(y)$ from Definition~\ref{def:un2qbmg-tree}. Now, let
  $z$ be such that $\sigma(z)=\sigma(y)$. Since $y \preceq_T \lca_T(x,z)$,
  contracting $wv$ yields $y \preceq_{T'} \lca_{T'}(x,z)$. Thus, $xy\in
  E(G')$.

  Conversely, suppose that $xy \notin E(G)$. We may assume that $\sigma(x)
  \neq \sigma(y)$, and $u(x) \neq x$ or $u(y) \neq y$. Indeed, in both
  cases we have $xy\notin E(G')$. Without loss of generality, assume that
  $u(x) \neq x$. $G$ being connected together with $xy \notin E(G)$ yields
  that there exists $z\in V(G)$ with $\sigma(z)=\sigma(y),\, xz\in E(G)$
  and $y$ is not a descendant of $\lca_T(x,z)$. Thus, $xy\in E(G')$ if and
  only if $\lca_T(x,z)=v$ and $y$ is a descendant of $w$ but not of $v$
  since $T'$ is obtained by contraction $wv$. This contradicts the
  assumption on $v$. Hence $xy \notin E(G')$.
\end{proof}

As a direct consequence, we obtain the following result.
\begin{corollary}\label{cor:lrt-monochr}
    For every least-resolved tree $(T,\sigma, u)$, and every internal
    vertex $v$ of $T$, the subtree $T(v)$ rooted at $v$ is not
    monochromatic.
\end{corollary}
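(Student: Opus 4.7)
The plan is to derive this as an immediate consequence of Proposition~\ref{prop:lrt_lca}. First I would fix an arbitrary internal vertex $v$ of $T$ and invoke the proposition to obtain an edge $xy \in E(G)$ with $\lca_T(x,y) = v$. Next, I would observe that by Definition~\ref{def:un2qbmg-tree}(ii), any edge of $G$ necessarily joins vertices of distinct colors, so $\sigma(x) \neq \sigma(y)$. Finally, since $\lca_T(x,y) = v$ forces both $x$ and $y$ to lie in $L(T(v))$, the subtree $T(v)$ contains two leaves of distinct colors, hence it is not monochromatic.

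The argument is essentially a one-line deduction, so no serious obstacle arises. The only point worth pausing on is the hypothesis: Proposition~\ref{prop:lrt_lca} is stated under the assumption that $G$ is connected, whereas the corollary does not repeat this assumption. If the statement is intended in the same connected setting as the preceding proposition, nothing further is needed. If it is intended in full generality, I would combine the above with Lemma~\ref{lm:union_un2qbmg}: decomposing $G = \bigcup_i G_i$ into its connected components and using the corresponding least-resolved trees $(T_i,\sigma_i,u_i)$, an internal vertex $v$ of $T$ is either internal within some component subtree $T_i$, in which case the previous argument applies verbatim, or $v$ is the star vertex joining several component subtrees, in which case $T(v)$ inherits leaves of both colors from any two distinct components $G_i$ (each of which is properly 2-colored by assumption).
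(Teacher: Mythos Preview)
Your argument is correct and is precisely the one-line deduction the paper intends: the corollary is stated as a direct consequence of Proposition~\ref{prop:lrt_lca}, and your derivation via the edge $xy$ with $\lca_T(x,y)=v$ and $\sigma(x)\neq\sigma(y)$ is exactly that. Your caveat about connectedness is well observed; the paper implicitly works in the connected setting here (as flagged after Lemma~\ref{lm:union_un2qbmg}), so the first paragraph of your proposal suffices.
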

The converse of Corollary~\ref{cor:lrt-monochr} does not hold in
general. As illustrated in Figure~\ref{fig:lrt_mon}, the non-monochromatic
trees $T$ and $T'$ explain the same graph $G$. Because $T'$ results from
contracting an edge in $T$, the latter is not least-resolved.

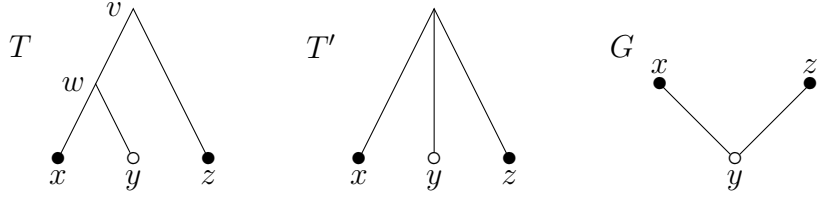
\begin{figure}
\begin{center}
\begin{tikzpicture}
\draw (0,0) node[below]{$x$} node{$\bullet$} -- (0.5,1) node[left]{$w$} -- (1,0) node[below]{$y$} node{\color{white} $\bullet$} node{$\circ$};
\draw (0.5,1) -- (1,2) node[left]{$v$} -- (2,0) node[below]{$z$} node{$\bullet$};
\draw (-0.5,1.5) node{$T$};

\draw (4,0) node[below]{$x$} node{$\bullet$} -- (5,2) -- (6,0) node[below]{$z$} node{$\bullet$};
\draw (5,2) -- (5,0) node[below]{$y$} node{\color{white} $\bullet$} node{$\circ$};
\draw (3.5,1.5) node{$T'$};

\draw (8,1) node{$\bullet$} node[above]{$x$} -- (9,0) node{\color{white} $\bullet$} node{$\circ$} node[below]{$y$} -- (10,1) node{$\bullet$} node[above]{$z$};
\draw (7.5,1.5) node{$G$};
\end{tikzpicture}
\caption{Example of non-monochromatic least-resolved tree.  
$(T,\sigma,u)$ and $(T,\sigma,u')$ explain $G$, where $u$ and $u'$ map every leaf to the root of $T$ and $T'$, respectively. Since $T'$ can be obtained from $T$ by contracting the arc $vw$, $T$ is not least-resolved, although it is not monochromatic.}
\label{fig:lrt_mon}
\end{center}
\end{figure}

We are now in a position to describe the set of neighbours of any vertex of
an un2qBMG.
\begin{proposition}\label{pr:neighbors}
  Let $G$ be an un2qBMG explained by a least-resolved tree $(T,\sigma,
  u)$. For every $xy\in E(G)$, one of the following cases holds:
  \begin{itemize}
  \item[(i)] the parent of $x$ is an ancestor of $y$ and $u(x) \neq x$;
  \item[(ii)] the parent of $y$ is an ancestor of $x$ and $u(y) \neq y$.
  \end{itemize}
\end{proposition}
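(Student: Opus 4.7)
The plan is to fix an edge $xy \in E(G)$ and, by Definition~\ref{def:un2qbmg-tree}(ii) combined with the $x\leftrightarrow y$ symmetry between subconditions (ii.1) and (ii.2), assume without loss of generality that (ii.1) holds, so $u(x)\neq x$ and $y\preceq \lca_T(x,z)$ for every $z\in L(T)$ with $\sigma(z)=\sigma(y)$. Under this assumption, case (i) of the proposition is equivalent to the equality $\lca_T(x,y)=\parent_T(x)$, since that equality automatically makes $\parent_T(x)$ an ancestor of $y$. Case (ii) will then follow by applying the symmetric argument to (ii.2).

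We establish the equality by contradiction. Suppose $v:=\lca_T(x,y)$ is a strict ancestor of $p:=\parent_T(x)$, and let $v'$ be the unique child of $v$ on the path from $v$ down to $x$, so $x\prec p\preceq v'\prec v$. The key step is to show that $T(v')$ is monochromatic in color $\sigma(x)$. Suppose, to the contrary, that some leaf $z\in L(T(v'))$ has $\sigma(z)=\sigma(y)$. Then $\lca_T(x,z)\preceq v'\prec v$ since both $x$ and $z$ lie in $L(T(v'))$. On the other hand, (ii.1) forces $y\preceq \lca_T(x,z)$, so $\lca_T(x,z)$ is a common ancestor of $x$ and $y$, and therefore $\lca_T(x,z)\succeq \lca_T(x,y)=v$. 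The two inequalities $\lca_T(x,z)\prec v$ and $\lca_T(x,z)\succeq v$ contradict each other, so no such $z$ exists.

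Once $T(v')$ is known to be monochromatic, it remains only to observe that $v'$ is an internal vertex of $T$: since $v\neq p$ we have $v'\neq x$, and any proper ancestor of the leaf $x$ must be internal. Corollary~\ref{cor:lrt-monochr}, which is exactly where least-resolvedness is invoked, then asserts that $T(v')$ cannot be monochromatic, giving the desired contradiction. The main conceptual move, and the step I would expect to be the only substantive one, is recognising that condition~(ii.1) rules out any leaf of color $\sigma(y)$ in the entire subtree below the first branching above $x$; everything else is routine bookkeeping with the $\preceq$ relation, and no serious obstacle is anticipated.
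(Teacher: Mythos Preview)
Your proof is correct and uses the same key ingredient as the paper, namely Corollary~\ref{cor:lrt-monochr} applied to an internal ancestor of $x$, together with condition~(ii.1). The only difference is stylistic: the paper argues directly by applying the corollary to $p=\parent_T(x)$ itself, obtaining a leaf $z\in L(T(p))$ with $\sigma(z)=\sigma(y)$, observing that $\lca_T(x,z)=p$ because $x$ is a child of $p$, and then reading off $y\preceq p$ from~(ii.1). Your contradiction route via the child $v'$ of $\lca_T(x,y)$ reaches the same conclusion with a little more bookkeeping, but there is no substantive difference in the argument.
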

\begin{proof}
  Let $xy\in E(G)$. Then either (ii.1) or (ii.2) in Definition~\ref{def:un2qbmg-tree} is satisfied. Suppose first that
  Definition~\ref{def:un2qbmg-tree}(ii.1) holds. Let $v$ be the parent of
  $x$ in $T$. We proceed to show that $v$ is an ancestor of $y$. Since
  $T(v)$ is non-monochromatic by Corollary~\ref{cor:lrt-monochr}, there
  exists $z \in L(T(v))$ such that $\sigma(z) \neq \sigma(x)$. In
  particular, we have $v=\lca(x,z)$, so the relation $y \preceq \lca(x,z)$
  implies that $y$ is a descendant of $v$. Hence, case (i) holds. Using
  similar arguments, Definition~\ref{def:un2qbmg-tree}(ii.2) yields case
  (ii).
\end{proof}

\section{Small un2qBMGs and bicliques}\label{sec:5vertices}
This section is devoted to the study of connected graphs with at most $5$
vertices, as well as complete bipartite graphs. In both cases, the graphs
turn out to be un2qBMGs.

\begin{proposition}\label{prop:bicliques}
  Complete bipartite graphs are precisely the un2qBMGs explained by
  $(T,\sigma,u)$ where $T$ is a star-tree and $u(x)= \rho$ for all $x\in
  L(T)$.
\end{proposition}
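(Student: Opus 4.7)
The statement is a biconditional that unpacks almost immediately from
Definition~\ref{def:un2qbmg-tree}. The observation driving both
directions is that in a star-tree $T$ with root $\rho$, every pair of
distinct leaves $x,y$ satisfies $\lca_T(x,y)=\rho$, so the dominance
requirement ``$y \preceq \lca_T(x,z)$ for every $z$ of color $\sigma(y)$''
in Definition~\ref{def:un2qbmg-tree}(ii.1) is trivially fulfilled. In
addition, setting $u(x)=\rho$ for every leaf automatically yields
$u(x)\neq x$, so the first clause of (ii.1) is also free.

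For the forward direction, I would start with a complete bipartite graph
$G$, let $\sigma$ be the proper $2$-coloring given by its bipartition,
build the star-tree $T$ on leaf set $V(G)$ rooted at $\rho$, and set
$u(x)=\rho$ for every $x\in L(T)$. By the observation above, condition
(ii.1) of Definition~\ref{def:un2qbmg-tree} is satisfied for every pair
$x,y$ with $\sigma(x)\neq\sigma(y)$, so $xy$ is an edge of the graph
explained by $(T,\sigma,u)$. Since $\sigma$ is a proper $2$-coloring, no
other edges can arise, so $(T,\sigma,u)$ explains precisely $G$, and
Proposition~\ref{prop:iff_explaing-tree} certifies that $G$ is an
un2qBMG.

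The converse runs along the same lines: given such a star-tree
$(T,\sigma,u)$, the $\lca$-observation shows that every cross-color pair
of leaves satisfies (ii.1), hence is an edge of the graph $G$ that
$(T,\sigma,u)$ explains; thus $G$ is the complete bipartite graph on
the two color classes of $\sigma$. There is essentially no obstacle
here: both directions are direct verifications from
Definition~\ref{def:un2qbmg-tree}. The only mild bookkeeping is
ensuring $|\sigma(L(T))|=2$, which holds whenever both color classes of
the target graph are non-empty.
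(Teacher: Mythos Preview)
Your proposal is correct and follows essentially the same approach as the paper: both directions are direct verifications of Definition~\ref{def:un2qbmg-tree}, using that in a star-tree every $\lca$ is the root and that $u(x)=\rho\neq x$ makes condition~(ii.1) automatic for all cross-color pairs. The paper's proof is marginally terser but makes the identical observations.
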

\begin{proof}
Let $(G,\sigma)$ be a biclique, and let $(T,\sigma,u)$ be a star-tree with
leaf-coloring $\sigma$, $L(T)=V(G)$, and $u(x)=\rho$ for all $x \in
L(T)$. Since $(G,\sigma)$ is a biclique, we have that $xy \in E(G)$ if and
only if $\sigma(x) \neq \sigma(y)$. Moreover, by definition of
$(T,\sigma,u)$, all pairs $x,y \in X$ with $\sigma(x) \neq \sigma(y)$
satisfy condition (ii.1) of Definition~\ref{def:un2qbmg-tree}. Hence,
$(G,\sigma)$ is explained by $(T,\sigma,u)$.\\ Conversely, suppose that
$(G,\sigma)$ is explained by $(T,\sigma,u)$. If $\sigma(x)=\sigma(y)$, then
by Definition~\ref{def:un2qbmg-tree}(ii), $xy \notin E(G)$. Otherwise,
i.e., if $\sigma(x) \neq \sigma(y)$, then $x$ and $y$ satisfy
Definition~\ref{def:un2qbmg-tree}(ii.1). Thus, $G$ is a complete bipartite
graph.
\end{proof}

\begin{proposition}\label{prop:small_cases}
  Let $(G,\sigma)$ be a connected bipartite graph and $1<|V(G)|<6$. $G$ is
  un2qBMG explained by the tree $(T,\sigma,u)$ whenever one of the
  following cases is satisfied.
\begin{itemize}
\item[(I)] $|V(G)| \in \{2,3\}$ or $|V(G)| \in\{4,5\}$ and $G$ is complete
  bipartite, $T$ is a star-tree with $L(T)=V(G)$ and $u(x):=\rho_T$ for all
  $x\in V(G)$.
\item [(II)] $|V(G)|=4$ and $G=x_1x_2x_3x_4$, $T$ is the tree in
  Figure~\ref{fig:small_trees}(a) and $u(x_1):=x_1, u(x_i):=\rho_T$ for
  $2\leq i\leq 4$;
\item [(III)] $|V(G)|=5$ and
  \begin{enumerate}
  \item[(1)] $E(G)=\{x_1x_2,x_2x_3,x_3x_4, x_4x_5\}$, $T$ is the tree in
    Figure~\ref{fig:small_trees}(b) and $u(x_i):=\rho_T$ for $1\leq i \leq
    5$;
  \item[(2)] $E(G)=\{x_1x_4,x_2x_3,x_3x_4,x_4x_5\}$, $T$ is the tree in
    Figure~\ref{fig:small_trees}(c), and $u(x_1)=x_1, u(x_5)=x_5,
    u(x_i):=\rho_T$ for $2 \leq i\leq 4$;
  \item[(3)] $E(G)=\{x_1x_2,x_1x_4,x_2x_3,x_3x_4,x_4x_5\}$, $T$ is the tree
    in Figure~\ref{fig:small_trees}(d) (top), and $u(x_5):=x_5,
    u(x_i):=\rho_T$ for $1\leq i\leq 4$.
  \end{enumerate}
 \end{itemize}
 \end{proposition}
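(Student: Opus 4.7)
The proof is a case-by-case verification showing that the prescribed tree $(T,\sigma,u)$ explains $G$ in the sense of Definition~\ref{def:un2qbmg-tree}; combining this with Proposition~\ref{prop:iff_explaing-tree} then yields that $G$ is an un2qBMG. Since $V(G)=L(T)$ holds by construction in every case, only condition (ii) of Definition~\ref{def:un2qbmg-tree} needs to be checked. Concretely, for each pair of leaves $x,y$ with $\sigma(x)\neq\sigma(y)$, I will verify that at least one of (ii.1), (ii.2) is satisfied precisely when $xy\in E(G)$.

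For case (I), a connected bipartite graph on $2$ or $3$ vertices is necessarily $K_{1,1}$ or $K_{1,2}$, hence complete bipartite, so together with the $|V(G)|\in\{4,5\}$ complete bipartite subcase I can simply invoke Proposition~\ref{prop:bicliques}, which already shows that the star-tree with $u\equiv\rho_T$ explains any biclique.

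For cases (II) and (III), the strategy is the same for each tree displayed in Figure~\ref{fig:small_trees}. First, I compute $\lca_T(x,z)$ for every relevant pair $(x,z)$; because each tree has at most two internal vertices, this reduces to a short list. Second, for each edge $xy\in E(G)$ I identify which of the two endpoints has $u\equiv\rho_T$ (so that $u(\cdot)\neq\cdot$) and confirm that the required descendant condition $y\preceq\lca_T(x,z)$ (or its symmetric counterpart) holds for every $z$ of color $\sigma(y)$ (resp.\ $\sigma(x)$). Third, for each non-edge $xy$ with $\sigma(x)\neq\sigma(y)$ I exhibit a witness $z$ of the appropriate color violating the descendant condition for both directions, or observe that both $u(x)=x$ and $u(y)=y$, so neither (ii.1) nor (ii.2) can apply. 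For example, in case (II) the vertex $x_1$ has $u(x_1)=x_1$, so edges incident to $x_1$ must be justified by condition (ii.2) applied to the other endpoint, whereas non-adjacency of $x_1$ with $x_3$ follows because $u(x_1)=x_1$ blocks (ii.1) and the sibling structure around $x_3$ provides a witness $z$ blocking (ii.2). A completely analogous argument handles each subcase in (III), where the only subtlety is case (III.2), in which both $u(x_1)=x_1$ and $u(x_5)=x_5$: here I must check that the configuration of the tree places $x_1$ and $x_5$ as siblings of different subtrees so that no pair involving them outside the listed edges accidentally satisfies (ii).

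The main obstacle is not mathematical depth but bookkeeping: making sure that every ordered pair is inspected against both (ii.1) and (ii.2) under the specified truncation map $u$. A practical way to keep this manageable is to organize the verification as a small table per case, listing for each vertex the set $\{z : \sigma(z)=\sigma(y)\}$ and the $\lca$ it realizes, so that the descendant condition becomes immediate to read off. Once this is done systematically, the conclusion in each case follows directly from Definition~\ref{def:un2qbmg-tree}, and Proposition~\ref{prop:iff_explaing-tree} completes the argument.
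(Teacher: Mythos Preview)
Your approach is essentially identical to the paper's: invoke Proposition~\ref{prop:bicliques} for case~(I) and verify Definition~\ref{def:un2qbmg-tree} directly for cases~(II) and~(III); the paper merely states ``it is straightforward to check'' where you outline the bookkeeping explicitly. One small slip: in your illustrative example for case~(II), the relevant non-edge between differently-colored vertices is $x_1x_4$, not $x_1x_3$ (the latter pair share a color), so the witness argument should be run for $x_1$ and $x_4$ instead.
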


 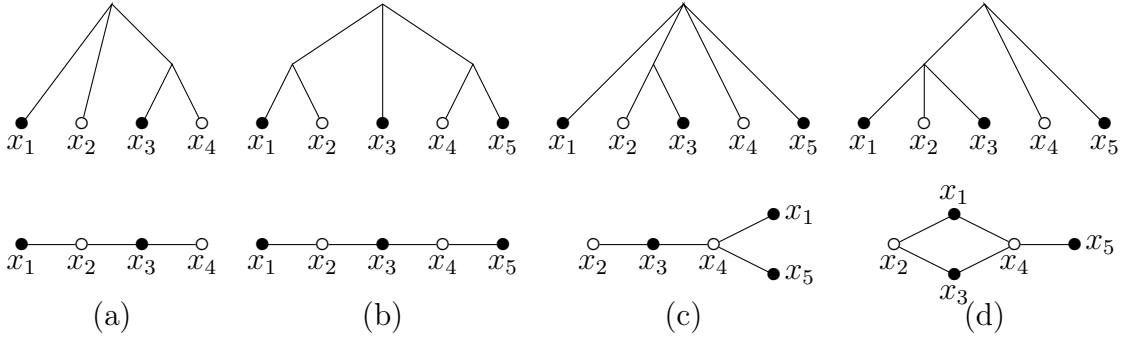
\begin{figure}
     \centering
     \begin{tikzpicture}[scale=0.8]
     \draw (0,-1.2) node{(a)};
     \draw (-1.5,0) -- (1.5,0);
     \draw (-1.5,0) node{$\bullet$} node[below]{$x_1$};
     \draw (-0.5,0) node{\color{white} $\bullet$} node{$\circ$} node[below]{$x_2$};
     \draw (0.5,0) node{$\bullet$} node[below]{$x_3$};
     \draw (1.5,0) node{\color{white} $\bullet$} node{$\circ$} node[below]{$x_4$};
     \draw (-1.5,2) node[below]{$x_1$} node{$\bullet$} -- (0,4) -- (-0.5,2) node[below] {$x_2$} node{\color{white} $\bullet$} node{$\circ$};
     \draw (0.5,2) node[below]{$x_3$} node{$\bullet$} -- (1,3) -- (1.5,2) node[below]{$x_4$} node{\color{white} $\bullet$} node{$\circ$};
     \draw (0,4) -- (1,3);

     \begin{scope}[xshift=4.5cm]
     \draw (0,-1.2) node{(b)};
     \draw (-2,0) -- (2,0);
     \draw (-2,0) node{$\bullet$} node[below]{$x_1$};
     \draw (-1,0) node{\color{white} $\bullet$} node{$\circ$} node[below]{$x_2$};
     \draw (0,0) node{$\bullet$} node[below]{$x_3$};
     \draw (1,0) node{\color{white} $\bullet$} node{$\circ$} node[below]{$x_4$};
     \draw (2,0) node{$\bullet$} node[below]{$x_5$};
     \draw (-2,2) node[below]{$x_1$} node{$\bullet$} -- (-1.5,3) -- (-1,2) node[below]{$x_2$} node{\color{white} $\bullet$} node{$\circ$};
     \draw (0,4) -- (0,2) node[below]{$x_3$} node{$\bullet$};
     \draw (1,2) node[below]{$x_4$} node{\color{white} $\bullet$} node{$\circ$} -- (1.5,3) -- (2,2) node[below]{$x_5$} node{$\bullet$};
     \draw (-1.5,3) -- (0,4) -- (1.5,3);
     \end{scope}

     \begin{scope}[xshift=9.5cm]
     \draw (0,-1.2) node{(c)};
     \draw (-1.5,0) -- (0.5,0);
     \draw (1.5,0.5) -- (0.5,0) -- (1.5,-0.5);
     \draw (-1.5,0) node{\color{white} $\bullet$} node{$\circ$} node[below]{$x_2$};
     \draw (-0.5,0) node{$\bullet$} node[below]{$x_3$};
     \draw (0.5,0) node{\color{white} $\bullet$} node{$\circ$} node[below]{$x_4$};
     \draw (1.5,0.5) node{$\bullet$} node[right]{$x_1$};
     \draw (1.5,-0.5) node{$\bullet$} node[right]{$x_5$};
     \draw (-2,2) node[below]{$x_1$} node{$\bullet$} -- (0,4) -- (-0.5,3);
     \draw (-1,2) node[below]{$x_2$} node{\color{white} $\bullet$} node{$\circ$} -- (-0.5,3) -- (0,2) node[below]{$x_3$} node{$\bullet$};
     \draw (1,2) node[below]{$x_4$} node{\color{white} $\bullet$} node{$\circ$} -- (0,4) -- (2,2) node[below]{$x_5$} node{$\bullet$};
     \end{scope}

     \begin{scope}[xshift=14.5cm]
     \draw (0,-1.2) node{(d)};
     \draw (1.5,0) -- (0.5,0);
     \draw (0.5,0) -- (-0.5,0.5) -- (-1.5,0) -- (-0.5,-0.5) -- cycle;
     \draw (-1.5,0) node{\color{white} $\bullet$} node{$\circ$} node[below]{$x_2$};
     \draw (-0.5,-0.5) node{$\bullet$} node[below]{$x_3$};
     \draw (-0.5,0.5) node{$\bullet$} node[above]{$x_1$};
     \draw (0.5,0) node{\color{white} $\bullet$} node{$\circ$} node[below]{$x_4$};
     \draw (1.5,0) node{$\bullet$} node[right]{$x_5$};
     \draw (-2,2) node[below]{$x_1$} node{$\bullet$} -- (-1,3) -- (0,4);
     \draw (-1,2) node[below]{$x_2$}  node{\color{white} $\bullet$} node{$\circ$}-- (-1,3) -- (0,2) node[below]{$x_3$} node{$\bullet$};
     \draw (1,2) node[below]{$x_4$} node{\color{white} $\bullet$} node{$\circ$} -- (0,4) -- (2,2) node[below]{$x_5$} node{$\bullet$};
     \end{scope}
     \end{tikzpicture}
     \caption{All connected non-biclique un2qBMGs with at most five
       vertices, together with the trees explaining them. Bipartitions
       $\sigma$ are shown as vertex-colorings. The truncation maps $u$
       are: (a) $u(x_1)=x_1$ and $u(x_i)=\rho_T$ for $i \in \{2,3,4\}$; (b)
       $u(x_i)=\rho_T$ for $i \in \{1, \ldots, 5\}$; (c) $u(x_1)=x_1$,
       $u(x_5)=x_5$ and $u(x_i)=\rho_T$ for $i \in \{2,3,4\}$; (d)
       $u(x_5)=x_5$ and $u(x_i)=\rho_T$ for $i \in \{1, \ldots, 4\}$.}
     \label{fig:small_trees}
 \end{figure}

\begin{proof}
  \noindent \begin{itemize}
  \item [(I)] $|V(G)| \in \{4,5\}$ and $G$ is a biclique, or $|V(G)| \in
    \{2,3\}$. Observe that $G$ is also a biclique when $|V(G)| \in \{2,3\}$
    since $G$ is connected. Proposition~\ref{prop:bicliques} ensures that
    $(G,\sigma)$ is exactly the un2qBMG explained by the tree
    $(T,\sigma,u)$ where $T$ is a star-tree with root $\rho_T$ and leaf set
    $L(T)=V(G)$, and $u(x):=\rho_T$ for all $x\in V(G)$.
  \item [(II)] $V(G)=\{x_1,x_2,x_3,x_4\}$ and
    $E(G)=\{x_1x_2,x_2x_3,x_3x_4\}$; see
    Figure~\ref{fig:small_trees}(a). Observe that setting $T$ as the tree
    in Figure~\ref{fig:small_trees}(a) and $u(x_1):=x_1, u(x_i):=\rho_T$
    for $2\leq i\leq 4$ gives a $(T,\sigma,u)$ explaining $G$ from
    Definition~\ref{def:un2qbmg-tree}. Hence, $G$ is an un2qBMG.
  \item[(III)] $V(G)=\{x_1,x_2,x_3,x_4,x_5\}$. In each of the three
    subcases, it is straightforward to check that $(T,\sigma,u)$ explains
    $G$ by Definition~\ref{def:un2qbmg-tree}. Hence $G$ is an un2qBMG.
\end{itemize}
\end{proof}

It is straightforward to verify that if $G$ is a bipartite, connected graph
with at most five vertices, its vertex-set and edge-set fall into one of
the cases (I), (II), or (III) of Proposition~\ref{prop:small_cases}. As a
consequence, the following result is obtained.

\begin{corollary}\label{cor:forbidden5}
  Every connected bipartite graph with at most $5$ vertices is an un2qBMG.
\end{corollary}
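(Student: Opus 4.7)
The plan is to prove the corollary directly by an isomorphism-class enumeration and match each connected bipartite graph on at most $5$ vertices to one of the cases (I), (II), (III) of Proposition~\ref{prop:small_cases}. The single-vertex case is trivial and the remaining possibilities fall under $1<|V(G)|<6$, so the proposition applies exactly to what we need.

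First, I would dispatch the easy cases. For $|V(G)|\in\{2,3\}$, connectivity forces $G\cong K_2$ or $G\cong P_3=K_{1,2}$, both of which are complete bipartite, covered by case (I). For $|V(G)|=4$, the connected bipartite graphs are the two trees $P_4$ and $K_{1,3}$ together with the only bipartite unicyclic graph on four vertices, $C_4=K_{2,2}$. Among these, $K_{1,3}$ and $K_{2,2}$ are bicliques (case (I)), and $P_4$ is exactly the graph of case (II).

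The bulk of the work is $|V(G)|=5$, which I would organise by edge count $m$. Since $G$ is connected, $m\geq 4$; since $G$ is bipartite on $5$ vertices (so the bipartition must be either $(1,4)$ or $(2,3)$), $m\leq 6$. If $m=4$, then $G$ is a tree on $5$ vertices, and up to isomorphism the only trees are $P_5$, the ``chair'' (a $P_4$ with a pendant attached to an internal vertex), and the star $K_{1,4}$; these are precisely the graphs of cases (III.1), (III.2), and (I), respectively. If $m=5$, then $G$ has a unique cycle, which must be of even length and fit in $5$ vertices, so it is a $C_4$, and the remaining vertex is a pendant; there is a unique such graph up to isomorphism, isomorphic to $K_{2,3}$ minus an edge, which is exactly the graph in case (III.3). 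If $m=6$, then $G=K_{2,3}$, a biclique, covered by (I).

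The main obstacle is really nothing more than bookkeeping: making sure the enumeration of five-vertex trees is exhaustive (it suffices to list degree sequences) and verifying that the graph of case (III.3) genuinely coincides with $C_4$-plus-pendant, which one sees by reading off its bipartition $\{x_1,x_3,x_5\}\mid\{x_2,x_4\}$ and its degree sequence. No further combinatorial ideas are needed, and disconnected instances do not enter here because they are handled separately via Lemma~\ref{lm:union_un2qbmg}.
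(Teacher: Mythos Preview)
Your proposal is correct and follows essentially the same approach as the paper: the paper simply asserts that it is straightforward to verify that every connected bipartite graph on at most five vertices falls into one of the cases (I), (II), or (III) of Proposition~\ref{prop:small_cases}, and you have carried out that verification explicitly by isomorphism-class enumeration. Your case analysis is complete and accurate.
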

An immediate consequence of Corollary~\ref{cor:forbidden5} is that there
are no forbidden induced subgraphs of a un2qBMG with at most $5$ vertices.
For $|V|=6$, both $P_6$ and $C_6$ are bipartite but not un2qBMGs.  In
Section~\ref{sec:forbidden} we will provide a characterization of un2qBMGs
in terms of forbidden induced subgraphs.
   
\section{Recognition of un2qBMGs}\label{sec:algorithm}

In this section, we present a polynomial-time algorithm for the recognition
of un2qBMGs. Consider a bipartite graph $(G,\sigma)$ with a vertex-coloring
$\sigma$ induced by the bipartition of the vertex-set $V(G)$. The algorithm
either builds a tree $(T,\sigma,\tau)$ explaining $G$, or returns the
statement that $G$ is not a un2qBMG. The following definition plays a key
role in the algorithm.

\begin{definition}
Let $(G,\sigma)$ be a bipartite graph. A vertex $x$ of $G$ that is adjacent
to all vertices of the opposite color of $G$ is called a
\emph{heart-vertex}.
\end{definition}

We begin with the following result for connected un2qBMGs.
\begin{lemma}\label{lm:vheart}
Every connected un2qBMG has a heart-vertex.
\end{lemma}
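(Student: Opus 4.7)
The plan is to leverage the least-resolved tree machinery built up in Section~\ref{sec:LRTs}. Start with a connected un2qBMG $(G,\sigma)$. By Proposition~\ref{prop:iff_explaing-tree}, there is some tree explaining $G$, and since $T$ has finitely many internal vertices, iteratively contracting internal edges whenever the explained graph is preserved terminates in a least-resolved tree $(T,\sigma,u)$ explaining $G$. All subsequent reasoning takes place in this fixed $(T,\sigma,u)$.

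I would then pick out a specific leaf destined to be the heart-vertex by looking at the root. By Proposition~\ref{prop:lrt_lca} applied at $\rho:=\rho_T$, there exists an edge $xy\in E(G)$ with $\lca_T(x,y)=\rho$. Apply Proposition~\ref{pr:neighbors} to this edge; without loss of generality we are in case (i), so $\parent(x)$ is an ancestor of $y$ and $u(x)\neq x$. Combined with $\lca_T(x,y)=\rho$, the fact that $\parent(x)$ must be an ancestor of $y$ forces $\parent(x)=\rho$ — otherwise $\parent(x)$ would be a strict descendant of $\rho$ lying on the path from $y$ to $\rho$, contradicting $\lca_T(x,y)=\rho$. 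Since $u(x)\in\{x,\rho\}$ by Definition~\ref{def:un2qbmg-tree} and $u(x)\neq x$, we conclude $u(x)=\rho$.

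Finally, I would check that this $x$ is indeed a heart-vertex. Take any $w\in V(G)$ with $\sigma(w)\neq\sigma(x)$ and verify condition (ii.1) of Definition~\ref{def:un2qbmg-tree} for the pair $(x,w)$. The first clause $u(x)\neq x$ already holds. For the second clause, any $z\in L(T)$ with $\sigma(z)=\sigma(w)$ satisfies $z\neq x$ (different colors), and since $x$ is a leaf-child of $\rho$, we have $\lca_T(x,z)=\rho$. Thus $w\preceq\rho=\lca_T(x,z)$ is automatic, yielding $xw\in E(G)$. Hence $x$ is adjacent to every vertex of the opposite color.

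I do not expect a real obstacle; the statement follows by direct assembly of Propositions~\ref{prop:iff_explaing-tree}, \ref{prop:lrt_lca}, and \ref{pr:neighbors}. The only mild subtlety is the justification that a least-resolved explaining tree exists (a termination argument on the number of internal vertices) and the short inference that $\parent(x)=\rho$ once $\lca_T(x,y)=\rho$ and $\parent(x)$ is an ancestor of $y$; everything else is a one-line check against Definition~\ref{def:un2qbmg-tree}.
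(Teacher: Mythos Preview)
Your proof is correct and follows essentially the same route as the paper: pick a least-resolved explaining tree, use Proposition~\ref{prop:lrt_lca} to find an edge with $\lca$ at the root, then invoke Proposition~\ref{pr:neighbors} to force one endpoint to be a child of $\rho$ with $u(x)\neq x$, which makes it a heart-vertex. Your write-up is slightly more explicit than the paper's (you spell out the existence of a least-resolved tree and the verification of condition~(ii.1) for an arbitrary $w$), but the argument is the same.
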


\begin{proof}
  Let $(G,\sigma)$ be a connected un2qBMG explained by a least-resolved
  tree $(T,\sigma,u)$. Proposition~\ref{prop:lrt_lca} implies that
  there exist $x,y \in V(G)$ such that $xy \in E(G)$ and
  $\rho_T=\lca_T(x,y)$.  By Proposition~\ref{pr:neighbors}, we may assume
  that $u(x) \neq x$ and that the parent $v$ of $x$ is an ancestor of
  $y$. Hence, $v = \lca(x,y) = \rho_T$, which implies that $x$ is a child
  of $\rho_T$. Together with $u(x) \neq x$, this yields $xz \in E(G)$ for
  every $z \in L(T)$ with $\sigma(z) \neq \sigma(x)$; that is, $x$ is a
  heart-vertex of $G$.
\end{proof}

Note that a un2qBMG $G$ needs not be connected. For example, the un2qBMG
explained by the tree $(T,\sigma,u)$ depicted in
Figure~\ref{fig:2qBMG_tree} has two connected components. However, all
connected components of $G$ are un2qBMG by Lemma~\ref{lm:union_un2qbmg},
and therefore, Lemma~\ref{lm:vheart} implies that all connected components
of $G$ have a heart-vertex. Lemma~\ref{lm:vheart} lies at the heart of our
algorithm \textsc{HEART-TREE}, which we present now.

\begin{algorithm}[ht!]
\caption{\textsc{HEART-TREE}} 
\label{alg:main} 
\begin{algorithmic}[1] 
\Require{A bipartite graph $G$.}
\Ensure{A tree $(T,\sigma,u)$ explaining $G$ if $G$ is a un2qBMG. Otherwise, \texttt{false}.}
\State{Let $\sigma$ be the vertex-coloring map induced by the bipartition of $V(G)$}\label{l:inits}
\State{$T:=\rho,\,F(\rho):=V(G)$} \label{l:initL}  
\If {$|V(G)|=1$}
\State{$x:=\rho$ and $u(x):=x$ with $V(G)=\{x\}$} 
\Else
\While{$L(T)\setminus V(G)\neq \emptyset$}\label{l:while}
\State{Take $v\in L(T)\setminus V(G)$}
\State{$G_v:=G[F(v)]$ and $H_v:=\{x \in F(v): x \mbox{ is a heart-vertex of } G_v\}$}\label{l:defGHv} 
\If{$H_v=\emptyset$ and $G_v$ is connected} ~\label{l:ifnope}
\Return{\texttt{false}}
\Else
\For{$x \in H_v$}
\State{Add $x$ as a child of $v$ and $u(x):=\rho$}\label{l:lcr}
\EndFor
\State{$G_v^-:=G_v[F(v) \setminus H_v]$}\label{l:defGv-}
\For{every connected components $C$ of $G_v^-$}\label{l:conGv-}
\If{$|V(C)|=1$}
\State{Add the unique $x \in V(C)$ as a child to $v$ and  $u(x):=x$}\label{l:lcx}
\Else
\State{Add a new child $w$ to $v$ and $F(w):=V(C)$} \label{l:tc}
\EndIf
\EndFor
\EndIf
\EndWhile
\EndIf
\Return $(T,\sigma,u)$
\end{algorithmic}
\end{algorithm}

\begin{theorem}\label{th:algoworks}
  Let $(G,\sigma)$ be a bipartite graph. Algorithm \textsc{HEART-TREE}
  returns a tree $(T,\sigma,u)$ if and only if $G$ is a un2qBMG. Moreover,
  in that case, the tree $(T,\sigma,u)$ returned by \textsc{HEART-TREE}
  explains $G$ and is least-resolved.
\end{theorem}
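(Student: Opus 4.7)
My plan is to prove the theorem in three directions: (1) if $G$ is a un2qBMG, the algorithm does not return \texttt{false}; (2) if the algorithm returns a tree $(T,\sigma,u)$, then this tree explains $G$ in the sense of Definition~\ref{def:un2qbmg-tree}; (3) the returned tree is least-resolved. I first observe that the algorithm terminates: at each iteration the chosen leaf $v \in L(T) \setminus V(G)$ acquires at least one child and thus leaves $L(T)$, and any newly created internal child $w$ satisfies $|F(w)| < |F(v)|$ since $F(v)$ is partitioned into $H_v$, singleton components, and larger components. For (1), I would argue by induction on the iterations that each $G_v$ is a un2qBMG: the base case is $G_\rho = G$, and since $G_w$ for a new internal child $w$ is the subgraph of $G$ induced by a connected component of $G_v^-$, heredity of the un2qBMG class (established in~\cite{korchmaros2025forbidden}) gives the inductive step. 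Lemma~\ref{lm:vheart} then ensures that every connected $G_v$ has a heart-vertex, so the \texttt{false}-branch is never taken.

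For (2), condition (i) of Definition~\ref{def:un2qbmg-tree} is immediate from the exit condition of the while-loop, as each $x \in V(G)$ is added as a leaf exactly once (either as a heart-vertex in line~\ref{l:lcr} or as a singleton component in line~\ref{l:lcx}). For condition (ii), note that each vertex $x \in V(G)$ is placed as a child of a unique tree vertex $v$ with $x \in F(v)$. For the forward direction, given $xy \in E(G)$, I let $v^{\star} = \lca_T(x,y)$ and show that at least one of $x,y$ lies in $H_{v^{\star}}$: if neither did, both would lie in components of $G_{v^{\star}}^-$, the same-component case contradicting $v^\star = \lca(x,y)$ and the distinct-component case contradicting $xy \in E(G)$. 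Suppose $x \in H_{v^{\star}}$, so $u(x) = \rho$; a case analysis on whether $z \in F(v^{\star})$ or not shows $y \preceq \lca_T(x,z)$ for all $z$ of color $\sigma(y)$, yielding~(ii.1). For the converse, if (ii.1) holds for $xy$, then $u(x) \neq x$ forces $x \in H_v$ for a unique internal $v$ with $u(x) = \rho$; since $F(v)$ is bichromatic whenever $v$ is internal (because $F(v)$ is connected with $|F(v)| \geq 2$), there exists $z^{\star} \in F(v)$ of color $\sigma(y)$ with $\lca_T(x,z^{\star}) = v$, so the hypothesis forces $y \preceq v$, hence $y \in F(v)$, and $x \in H_v$ gives $xy \in E(G_v) \subseteq E(G)$. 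The analysis for (ii.2) is symmetric.

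For (3), I invoke Proposition~\ref{prop:lrt_lca}: for each internal $v$ of $T$ with $G_v$ connected, Part~(1) gives $x \in H_v \neq \emptyset$, and any opposite-color neighbor $y \in F(v)$ of $x$ (which exists since $F(v)$ is bichromatic and connected) satisfies $\lca_T(x,y) = v$, because $x$ is a child of $v$ while $y$ is either another child of $v$ or lies in the subtree of a distinct child. For disconnected $G$, Lemma~\ref{lm:union_un2qbmg} reduces the analysis to connected components. The cubic running time follows by bounding each iteration's cost of computing $H_v$ and the connected components of $G_v^-$ by $O(|F(v)|^2)$ and noting that $\sum_v |F(v)|^2 \leq |V(G)|^2$ at each depth, with depth $O(|V(G)|)$. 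The \textbf{main obstacle} I foresee is the converse direction of~(ii) in Part~(2): one must rule out that the tree's placements force adjacency for some non-edge of $G$. The crucial combinatorial handle is that every non-trivial $F(v)$ is bichromatic and connected, so heart-vertex membership is a faithful witness of $G$-adjacency; vertices placed in different subtrees of the same $v$ lie in different components of $G_v^-$ and are therefore non-adjacent in $G$ unless one of them is a heart-vertex at $v$ or at some ancestor of $v$.
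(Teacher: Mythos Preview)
Your Parts~(1) and~(2) track the paper's argument closely and are sound. The gap is in Part~(3). You ``invoke Proposition~\ref{prop:lrt_lca}'' and then verify that every internal vertex $v$ of the output tree is $\lca_T(x,y)$ for some edge $xy\in E(G)$. But Proposition~\ref{prop:lrt_lca} only gives the implication \emph{least-resolved $\Rightarrow$ every internal vertex is an lca of some edge}; you are using its converse, which is false in general. Figure~\ref{fig:lrt_mon} in the paper is a direct counterexample: the tree $T$ there has two internal vertices $w=\lca(x,y)$ and $v=\lca(y,z)$, both realized as lcas of edges of $G$, yet $T$ is not least-resolved because contracting $vw$ still yields a tree explaining $G$. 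So your verification, while correct, does not establish least-resolvedness.

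What is missing is exactly the extra step the paper carries out: for an arbitrary internal arc $vw$, exhibit an edge of the graph explained by the contracted tree that is \emph{not} in $E(G)$. Your own setup already contains the key ingredient. The heart-vertex $x\in H_w$ that you produce is, by construction of the algorithm, \emph{not} a heart-vertex of $G_v$ (otherwise it would have been attached at $v$, not at $w$, since $F(w)\subseteq F(v)\setminus H_v$). Hence there exists $y\in F(v)$ with $\sigma(y)\neq\sigma(x)$ and $xy\notin E(G)$. After contracting $vw$, $x$ becomes a child of the merged vertex, still with $u(x)=\rho$, and one checks via Definition~\ref{def:un2qbmg-tree}(ii.1) that $xy$ is now an edge of the graph explained by $(T',\sigma,u)$. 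This is precisely the paper's argument; your sketch stops one inference short of it. Once you replace the appeal to the (unavailable) converse of Proposition~\ref{prop:lrt_lca} by this direct contraction argument, Part~(3) goes through.
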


\begin{proof}
Suppose first that Algorithm~\ref{alg:main} returns \texttt{false}. Then,
there exists an instance of the loop initiated at Line~\ref{l:while} in
which $H_v=\emptyset$ and $G_v$ is connected (Line~\ref{l:ifnope}), where
$H_v$ and $G_v$ are defined at Line~\ref{l:defGHv}. From
Lemma~\ref{lm:vheart}, $G_v$ is not a un2qBMG. Hence, $G$ is not an
un2qBMG, as $G_v$ is an induced subgraph of $G$, and the property of being
an un2qBMG is hereditary~\cite[Proposition
  3.1]{korchmaros2025forbidden}.

Conversely, suppose that Algorithm~\ref{alg:main} returns a tree
$(T,\sigma,u)$. Showing that $G$ is a un2qBMG is equivalent to verifying
that $(T,\sigma,u)$ explains $G$ by
Proposition~\ref{prop:iff_explaing-tree}. Let $G_T$ be the un2qBMG
explained by $(T,\sigma,u)$. We proceed to show that $G=G_T$ by verifying
that (I) $V(G)=V(G_T)$ and (II) $E(G)=E(G_T)$.
\begin{itemize}
\item[(I)] $V(G)=V(G_T)$. $V(G_T)=L(T)$ follows from
  Definition~\ref{def:un2qbmg-tree}(i); therefore, it remains to show that
  $V(G)=L(T)$. From the condition on Line~\ref{l:while}, $L(T) \subseteq
  V(G)$ must hold to exit the loop. We now show that $V(G) \subseteq L(T)$
  also holds. First, we claim that for all internal vertices $v$ such that
  $x \in F(v)$, either $x$ is a child of $v$, or there exists exactly one
  child $w$ of $v$ such that $x \in F(w)$. To see the claim, let $G_v$,
  $H_v$ and $G_v^-$ be as defined on Lines~\ref{l:defGHv} and
  \ref{l:defGv-}. If $x \in H_v$, then $x$ is a child of $v$
  (Line~\ref{l:lcr}), and the claim is verified. Otherwise, $x$ is a vertex
  of $G_v^-$, and there exists a (necessarily unique) connected component
  $C$ of $G_v^-$ containing $x$ in its vertex-set. If $V(C)=\{x\}$, then
  $x$ is a child of $v$ (Line~\ref{l:lcx}), and the claim is
  verified. Otherwise, $v$ has a child $w$ that satisfies $F(w)=V(C)$, and
  the claim is satisfied since $x \in V(C)$. The claim being true, and
  since $x \in F(\rho)$ (Line~\ref{l:initL}), it follows that there exists
  a (necessarily unique) leaf $v$ of $T$ such that either $u=x$ or $x \in
  F(u)$ holds. In view of the condition at Line~\ref{l:while}, the latter
  is impossible, so we have $x \in L(T)$, and $V(G) \subseteq L(T)$ holds
  as desired. This concludes the proof of the claim that $V(G)=V(G_T)$ holds.

\item[(II)] $E(G)=E(G_T)$. Let $xy\in E(G_T)$ with
  $v:=\lca(x,y)$. Line~\ref{l:tc} ensures $x,y \in F(v)\subseteq
  V(G_v)$. Note that $xy \in E(G_T)$ implies $\sigma(x)\neq \sigma (y)$. We
  show that at least one of $x$ or $y$ is an element of $H_v$. Assume that
  $x,y\notin H_v$. Hence, $x,y\in V(G_v^-)$ (Line~\ref{l:defGv-}). Let
  $C_x$ and $C_y$ be the connected components of $G_v^-$ that contain $x$
  and $y$, respectively. If $C_x=C_y$, then there exists a child $w$ of $v$
  such that $x,y \in F(w)$ (Line~\ref{l:tc}). In particular, $w$ is an
  ancestor of both $x$ and $y$ in $T$, contradicting $v=\lca(x,y)$. Hence,
  $C_x \neq C_y$. Up to a permutation, we may assume that $u(x)\neq x$ from
  Definition~\ref{def:un2qbmg-tree}(ii), as $xy\in E(G_T)$. Observe that
  $|C_x|>2$. Indeed, if $|V(C_x)|=1$, then $x$ is the unique element of
  that set, and $u(x)=x$ (Lines~\ref{l:lcx}), contradicting the assumption
  that $u(x)\neq x$. Connectedness of $C_x$ implies that there exists $z
  \in V(C_x)$ such that $xz \in E(G)$ and $\sigma(z) \neq \sigma(x)$
  (Line~\ref{l:inits}). Let $w$ be the child of $v$ such that $L(T(w))=C_x$
  (Line~\ref{l:tc}). Thus, $\lca(x,z)\preceq w \prec v=\lca(x,y)$,
  contradicting Definition~\ref{def:un2qbmg-tree}(ii.1). Hence, at least
  one of $x$ or $y$ must be an element of $H_v$. From the definition of
  $H_v$ (Line~\ref{l:defGHv}), $xy \in E(G_v)$. In addition, $xy \in E(G)$
  since $G_v$ is an induced subgraph of $G$ (Line~\ref{l:defGHv}). Hence
  $E(G_T) \subseteq E(G)$.

  To show that $E(G) \subseteq E(G_T)$, take $xy\in E(G)$ with
  $v:=\lca(x,y)$. Line~\ref{l:inits} ensures that $\sigma(x) \neq
  \sigma(y)$. From Lines~\ref{l:lcr},~\ref{l:lcx}, and \ref{l:tc}, we have
  $x,y \in F(v)$; thus $x$ and $y$ are vertices of $G_v$. Suppose $x,y\in
  V(G_v^-)$. Hence $xy\in E(G_v^-)$ since $xy\in E(G)$ and $G_v^-$ is an
  induced subgraph of $G$ (Line~\ref{l:defGHv}). Therefore, $x$ and $y$
  belong to the same connected component $C$ of $G_v^-$. In particular,
  there exists a child $w$ of $v$ such that $x,y \in F(w)$
  (Line~\ref{l:tc}). This implies that $x$ and $y$ are both descendant of
  $w$, contradicting $v=\lca(x,y)$. Hence, at least one between $x$ and $y$
  is an element of $H_v$. Without loss of generality, we may assume that $x
  \in H_v$. Then, $x$ is a child of $v$, and we have $u(x) \neq x$
  (Line~\ref{l:lcr}). Take $z\in L(T)$ with $\sigma(z)=\sigma(y)$. $z\notin
  V(G_v)$ yields $v\prec\lca(x,z)$ (Line~\ref{l:tc}). On the other hand,
  $\lca(x,z)=v$ whenever $z\in V(G_v)$. Therefore, in both cases, $y\prec
  \lca(x,y)=v\preceq\lca(x,z)$. Hence, $xy\in E(G_T)$ from
  Definition~\ref{def:un2qbmg-tree}(ii.1).
\end{itemize}
It remains to show that $(T,\sigma,u)$ is least resolved. Let $vw$ be an
internal arc of $T$, and let $T'$ be the tree obtained from $T$ by
contracting the arc $vw$. Let $G'$ be the un2qBMG explained by
$(T',\sigma,u)$. We show that $G\neq G'$.  Observe that $F(w)$ induces a
connected subgraph of $G$ of size $2$ or more, as $v$ is not a leaf of
$T$. In particular, $G_w$ has a heart-vertex $x$ by
Lemma~\ref{lm:vheart}. Moreover, there exists a vertex $y \in F(v)$ such
that $\sigma(y) \neq \sigma(x)$ and $xy \notin E(G)$. Indeed, if no such
vertex exists, then $x$ is a heart-vertex of $G_v$, contradicting the
choice of $x$ as an element of $F(w) \subseteq F(v) \setminus H_v$. Define
$\lca_{T'}(x,y)=\overline v$, where $\overline v$ is the vertex resulting
from the contraction of the arc $vw$. Observe that $u(x)\neq x$ since $x$
is a heart-vertex of $G_w$. Thus, $xy\in E(G')$ from
Definition~\ref{def:un2qbmg-tree}~(ii.1) but $xy\notin E(G)$. Hence, $G\neq
G'$.
\end{proof}

\begin{proposition}\label{prop:complexity_alg}
  \textsc{HEART-TREE} algorithm runs in $O(n^3)$ where $n=V(G)$.
\end{proposition}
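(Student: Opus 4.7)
The plan is to bound the total cost of Algorithm~\ref{alg:main} by summing the work done at each internal vertex of the tree $T$ it constructs, combining an $O(n_v^2)$ per-vertex cost (where $n_v := |F(v)|$) with a level-wise disjointness argument and a linear depth bound.

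First I would estimate the work at a single internal vertex $v$. Under an adjacency-matrix representation of $G$, computing $G_v = G[F(v)]$ and $G_v^- = G_v[F(v) \setminus H_v]$ (Lines~\ref{l:defGHv} and \ref{l:defGv-}) costs $O(n_v^2)$. Identifying the heart-vertex set $H_v$ requires, for each $x \in F(v)$, scanning the opposite-color vertices of $F(v)$ for adjacency with $x$, which is $O(n_v^2)$ in total. Testing whether $G_v$ is connected and enumerating the connected components of $G_v^-$ by BFS/DFS is also $O(n_v^2)$. Hence the processing of $v$ costs $O(n_v^2)$.

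Next I would bound $\sum_v n_v^2$ via a depth argument combined with level-wise disjointness. At a non-root internal vertex $v$, the set $F(v)$ was assigned on Line~\ref{l:tc} as a connected component of the $G^-$ computed at its parent, so $G_v = G[F(v)]$ is connected; since the algorithm did not return \texttt{false} at $v$, Lemma~\ref{lm:vheart} forces $H_v \neq \emptyset$, and every child $w$ of $v$ satisfies $|F(w)| \leq n_v - 1$. At the root, either $H_\rho \neq \emptyset$ (and the same bound holds), or $G_\rho$ is disconnected and its children correspond to strictly smaller components. Hence the depth of $T$ is at most $n-1$. Moreover, by an easy induction on $\ell$ using that the children of a common internal vertex receive pairwise disjoint $F$-values (Lines~\ref{l:lcr}, \ref{l:lcx}, \ref{l:tc}), the sets $F(v)$ at any fixed depth $\ell$ are pairwise disjoint subsets of $V(G)$. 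Therefore $\sum_{v \text{ at depth } \ell} n_v \leq n$, whence
\[
\sum_{v \text{ at depth } \ell} n_v^2 \;\leq\; n \cdot \max_{v \text{ at depth } \ell} n_v \;\leq\; n^2.
\]
Summing across the $O(n)$ depths yields the claimed $O(n^3)$ bound; if the algorithm returns \texttt{false}, the work done is a subsum of the above and the same bound applies.

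The main obstacle I anticipate is establishing the strict decrease $|F(w)| < |F(v)|$ uniformly across all internal vertices, which hinges on (a) the connectedness of $G_v$ at every non-root internal vertex so that Lemma~\ref{lm:vheart} can be invoked, and (b) correctly handling the root case when $H_\rho$ may be empty but $G_\rho$ is disconnected. Both points follow transparently from the construction of $T$, but they need to be verified explicitly to secure the depth bound and thus the cubic running time.
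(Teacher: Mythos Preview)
Your argument is correct but takes a different route from the paper. The paper observes that the tree $T$ built by \textsc{HEART-TREE} is phylogenetic (every internal vertex receives at least two children), hence $|V(T)| \le 2|L(T)| - 2 = O(n)$; it then bounds the work at each internal vertex by $O(|F(v)| + |E(G_v)|)$ under an adjacency-list model and sums crudely over the $O(n)$ internal vertices: $\sum_v |F(v)| \le O(n)\cdot n = O(n^2)$ and $\sum_v |E(G_v)| \le O(n)\cdot n^2 = O(n^3)$. You instead decompose by depth, using strict decrease of $|F(\cdot)|$ along any root-to-child step to bound the depth by $O(n)$, and level-wise disjointness of the sets $F(v)$ to bound the per-level cost $\sum_v n_v^2$ by $n^2$. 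The paper's route is shorter and sidesteps the depth analysis entirely; your route is more refined and would be the natural starting point if one wanted to sharpen the bound.

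One minor slip: the appeal to Lemma~\ref{lm:vheart} is misplaced, since that lemma applies only to un2qBMGs while the running-time analysis must cover arbitrary bipartite inputs. The conclusion $H_v \neq \emptyset$ at a non-root internal vertex follows directly from the algorithm's control flow at Line~\ref{l:ifnope} (if $G_v$ is connected and $H_v = \emptyset$, the algorithm halts with \texttt{false}), so no external lemma is needed and your strict-decrease argument stands as written once you drop that reference.
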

\begin{proof}
  In each iteration of the while-loop the algorithm processes a leaf $v$ by
  creating the induced subgraph $G_v:=G[F(v)]$ (Line~\ref{l:defGHv}),
  finding the set $H_v$ of heart-vertices of $G_v$ (Line~\ref{l:defGHv}),
  creating $G_v^-:=G_v[F(v)\setminus H_v]$ (Line~\ref{l:defGv-}), and
  computing its connected components (Line~\ref{l:conGv-}). These steps can
  be achieved at once by scanning the adjacency lists of the vertices in
  $F(v)$ and traversing $G_v$ with Breadth-First Search~\cite{CLRS2009},
  hence in $O(|F(v)|+|E(G_v)|)$.  Let $\mathcal{P}$ denote the (finite) set
  of elements $v$ that are processed by the loop. The running time is
  \begin{equation}\label{eq1}
    \sum_{v\in\mathcal P} O(|F(v)|+|E(G_v)|\big)
    \;\leq\; O\!\left(\sum_{v\in\mathcal P} |F(v)|\right)
    + O\!\left(\sum_{v\in\mathcal P} |E(G_v)|\right).
  \end{equation}
  Recall that the set $\mathcal{P}$ of elements $v$ processed by the loop
  is precisely the set $V(T) \setminus L(T)$, where $T$ is the tree
  returned by \textsc{HEART-TREE}. On the other hand, $T$ is a tree, so we
  have $|V(T)| \leq 2|L(T)|-2=O(|L(T)|)=O(n)$.
  This, together with $F(v)\subseteq V(G)$ (Line~\ref{l:tc}) and
  $E(G_v)\subseteq E(G)$ (Line~\ref{l:defGHv}) yields
  \begin{equation}\label{eq2}
    \begin{split}
    \sum_{v\in\mathcal P} |F(v)| &\leq \sum_{v\in V(T)} |F(v)| \leq O(n^2)
    \\
    \sum_{v\in\mathcal P} |E(G_v)|&\leq\sum_{v\in V(T)} |E(G_v)|\leq O(n^3).
    \end{split}
    \end{equation}
  Combining Equations \eqref{eq1} and \eqref{eq2} yields a total running
  time of $O(n^3)$.
\end{proof}

An immediate consequence of Proposition~\ref{prop:complexity_alg} together
with Theorem~\ref{th:algoworks} is the following result on the
computational complexity of un2qBMGs.
\begin{corollary}
  For a bipartite graph $G$, it is decidable in cubic time whether $G$ is
  an un2qBMG.
\end{corollary}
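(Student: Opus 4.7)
The plan is to observe that the corollary is an essentially immediate consequence of the two preceding results, so the work to be done is just to chain them together cleanly. First I would invoke Theorem~\ref{th:algoworks}, which asserts that on any bipartite input $G$, the \textsc{HEART-TREE} algorithm terminates and returns either a tree $(T,\sigma,u)$ or the value \texttt{false}, with the return value being a tree if and only if $G$ is a un2qBMG. This already establishes that \textsc{HEART-TREE} is a correct decision procedure for the un2qBMG property: reading off whether the output is a tree (equivalently, whether \texttt{false} was not returned) answers the decision question.

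Next I would invoke Proposition~\ref{prop:complexity_alg}, which bounds the running time of \textsc{HEART-TREE} by $O(n^3)$, where $n=|V(G)|$. Combining the two statements, the decision procedure ``run \textsc{HEART-TREE} on $G$ and output \emph{yes} if and only if the result is a tree'' is correct by Theorem~\ref{th:algoworks} and runs in $O(n^3)$ time by Proposition~\ref{prop:complexity_alg}. This is exactly the content of the corollary.

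There is no real obstacle: the statement is a direct packaging of the correctness result together with the complexity bound, and both the correctness and the running-time analysis have already been carried out. The only thing to be careful about is to note explicitly that recognition is the decision problem derived from \textsc{HEART-TREE} (ignoring the tree output), so that Proposition~\ref{prop:complexity_alg}'s $O(n^3)$ bound transfers without loss to the decision task. The proof is therefore a one-line appeal to Theorem~\ref{th:algoworks} and Proposition~\ref{prop:complexity_alg}.
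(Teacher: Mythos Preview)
Your proposal is correct and matches the paper's own treatment exactly: the corollary is stated as an immediate consequence of Theorem~\ref{th:algoworks} (correctness of \textsc{HEART-TREE}) together with Proposition~\ref{prop:complexity_alg} (the $O(n^3)$ running-time bound). There is nothing to add.
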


Equipped with \textsc{HEART-TREE} algorithm, we arrive at the following characterization of un2qBMGs.
\begin{theorem}\label{th:charh}
  Let $G$ be a bipartite graph. Then $G$ is a un2qBMG if and only if all
  connected induced subgraphs of $G$ contain a heart-vertex.
\end{theorem}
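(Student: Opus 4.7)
The plan is to combine two results already established in the paper: the hereditary nature of un2qBMGs and the correctness of Algorithm~\ref{alg:main}. The forward direction is a one-line appeal to heredity plus Lemma~\ref{lm:vheart}, while the backward direction is a short contrapositive argument that reads off the failure condition of \textsc{HEART-TREE}.

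For the forward implication, I would argue directly: suppose $G$ is a un2qBMG and let $H$ be a connected induced subgraph of $G$. Since the class of un2qBMGs is closed under induced subgraphs~\cite[Proposition~3.1]{korchmaros2025forbidden}, $H$ is itself a un2qBMG. Lemma~\ref{lm:vheart} then guarantees that $H$ contains a heart-vertex, which is exactly what is required.

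For the backward implication, I plan to prove the contrapositive by running \textsc{HEART-TREE} on $G$. Assume $G$ is not a un2qBMG. Theorem~\ref{th:algoworks} forces the algorithm to return \texttt{false}, and inspection of Line~\ref{l:ifnope} shows that this can happen only at an iteration in which the induced subgraph $G_v=G[F(v)]$ is connected and admits no heart-vertex. By construction (Line~\ref{l:defGHv}), $G_v$ is an induced subgraph of $G$, so it provides a connected induced subgraph of $G$ with no heart-vertex, contradicting the hypothesis. Hence $G$ must be a un2qBMG.

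The main (and essentially only) obstacle is bookkeeping: one must be confident that every set $F(v)$ handled by the loop induces a subgraph of $G$, and that the offending $G_v$ at Line~\ref{l:ifnope} is genuinely a connected subgraph of $G$ rather than of some auxiliary graph. Both facts are immediate from Lines~\ref{l:defGHv} and~\ref{l:ifnope}, so once Theorem~\ref{th:algoworks} is invoked the proof reduces to these short observations. No new structural arguments about the tree $(T,\sigma,u)$ are needed beyond what the algorithm's correctness already supplies.
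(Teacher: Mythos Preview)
Your proposal is correct and follows essentially the same approach as the paper: the forward direction uses heredity of un2qBMGs together with Lemma~\ref{lm:vheart}, and the converse is proved by contrapositive via Theorem~\ref{th:algoworks}, extracting the connected heart-vertex-free induced subgraph $G_v$ from the failing iteration at Line~\ref{l:ifnope}. Your bookkeeping remarks about $G_v$ being an induced subgraph of $G$ are exactly the observations the paper relies on as well.
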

\begin{proof}
  Suppose $G$ is a un2qBMG explained by an LRT $(T,\sigma,u)$. Take a
  connected induced subgraph $H$ of $G$. $H$ is also an un2qBMG, as being
  an un2qBMG is hereditary~\cite[Proposition 3.1]{korchmaros2025forbidden}. Hence $H$ has a
  heart-vertex from Lemma~\ref{lm:vheart}.  Conversely, suppose that $G$ is
  not an un2qBMG. By Theorem~\ref{th:algoworks}, \textsc{HEART-TREE}
  returns \texttt{false} when applied to $G$. Hence, there exists an
  instance of the loop initiated at Line~\ref{l:while} in which
  $H_v=\emptyset$ and $G_v$ is connected (Line~\ref{l:ifnope}, where $H_v$
  and $G_v$ are defined at Line~\ref{l:defGHv}). In particular, $G_v$ is a
  connected induced subgraph of $G$ that does not contain a heart-vertex.
\end{proof}

\section{Forbidden subgraphs characterization}\label{sec:forbidden}
In this section, we show that un2qBMGs are not properly contained in the
family of $(P_6,C_6)$-free graphs, as they are characterized as those
graphs free of induced $P_6,C_6$, and Sunlet$_4$ graphs.

\begin{lemma}\label{lm:maxpr}
  Let $(G,\sigma)$ be a connected bipartite graph. Let $x \in V(G)$ such
  that $\deg_G(x)$ is maximal among all vertices. If $G$ is $P_6$-free,
  then for every $x' \in V(G)$ with $\sigma(x)=\sigma(x')$, there exists a
  vertex $y$ with $\sigma(y)\neq \sigma(x)$ such that $xy,x'y\in E(G)$.
\end{lemma}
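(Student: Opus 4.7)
The plan is to argue by contradiction. Suppose there is an $x' \in V(G)$ with $\sigma(x')=\sigma(x)$ such that $N_G(x)\cap N_G(x')=\emptyset$; in particular $x\neq x'$. I will produce either an induced $P_6$ or a vertex of degree strictly greater than $\deg_G(x)$, each contradicting a hypothesis.

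First, consider a shortest $x$–$x'$ path $P:x=v_0,v_1,\dots,v_k=x'$ in $G$. Since the endpoints share the same color, $k$ is even, and $k=2$ would furnish a common neighbor, so $k\geq 4$. Any shortest path is induced (a chord would shortcut it), so if $k\geq 5$, then $v_0v_1v_2v_3v_4v_5$ is already an induced $P_6$, contradicting the $P_6$-freeness of $G$. Hence the distance is exactly $4$, and I write the path as $x,v_1,v_2,v_3,x'$.

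Next, I claim $N_G(x)\subseteq N_G(v_2)\cup\{v_1\}$. Take $a\in N_G(x)\setminus\{v_1\}$. Bipartiteness forbids $av_1$ and $av_3$ because $a,v_1,v_3$ all lie in the color class opposite to $x$. The hypothesis $N_G(x)\cap N_G(x')=\emptyset$ forbids $ax'$, and also forces $a\neq v_3,x'$ (which are non-neighbors of $x$). The chord-freeness of $P$ takes care of every non-adjacency internal to $v_0v_1v_2v_3v_4$. Consequently, if $a\not\sim v_2$, the sequence $a,x,v_1,v_2,v_3,x'$ would be an induced $P_6$, a contradiction; therefore $a\in N_G(v_2)$. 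By the symmetric argument applied from the other end, $N_G(x')\subseteq N_G(v_2)\cup\{v_3\}$.

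Since $v_1,v_3\in N_G(v_2)$, the two inclusions yield $N_G(x)\cup N_G(x')\subseteq N_G(v_2)$. The sets $N_G(x)$ and $N_G(x')$ are disjoint by assumption, and $|N_G(x')|\geq 1$ because $v_3\in N_G(x')$, so
\[
\deg_G(v_2)\ \geq\ |N_G(x)|+|N_G(x')|\ \geq\ \deg_G(x)+1,
\]
contradicting the maximality of $\deg_G(x)$. Hence $N_G(x)\cap N_G(x')\neq\emptyset$, and any vertex in this intersection serves as the desired $y$. The main (mild) obstacle is the bookkeeping in the second step: one must check that all the non-adjacencies required for $a,x,v_1,v_2,v_3,x'$ to be induced actually follow from bipartiteness, from the shortest-path property of $P$, and from the disjoint-neighborhood assumption; once that is in hand, the contradiction via $\deg_G(v_2)$ is immediate.
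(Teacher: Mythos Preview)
Your proof is correct. Both your argument and the paper's begin by taking a shortest $x$--$x'$ path, ruling out length $\ge 6$ via $P_6$-freeness, and reducing to the distance-$4$ case $x,v_1,v_2,v_3,x'$. The difference lies in how the contradiction is extracted there, and the two arguments are essentially dual. The paper applies the degree hypothesis first: since $v_3\in N(v_2)\setminus N(x)$ and $\deg_G(x)\ge\deg_G(v_2)$, there exists $y'\in N(x)\setminus N(v_2)$, and then $y',x,v_1,v_2,v_3,x'$ is an induced $P_6$. You invert the order of the two ingredients: for each $a\in N(x)\setminus\{v_1\}$ you use $P_6$-freeness on $a,x,v_1,v_2,v_3,x'$ to force $a\in N(v_2)$, and symmetrically for $N(x')$; the disjointness of $N(x)$ and $N(x')$ then gives $\deg_G(v_2)>\deg_G(x)$. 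Your route yields the pleasant structural byproduct $N(x)\cup N(x')\subseteq N(v_2)$, while the paper's is marginally more economical, needing only a single auxiliary vertex rather than a neighbourhood-wide containment.
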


\begin{proof}
  Let $x'\in V(G)$ with $\sigma(x')=\sigma(x)$. Since $G$ is connected,
  there exists a path $P$ between $x$ and $x'$ in $G$. Without loss of
  generality, we may assume that $P$ is a shortest path between $x$ and
  $x'$. Since $G$ is bipartite and $\sigma(x')=\sigma(x)$, $P$ contains an
  odd number of vertices. Moreover, $G$ is $P_6$-free, so
  $|V(P)|<6$. Hence, we have $|V(P)| \in \{3,5\}$, so either $P=xyx'$ for
  some vertex $y$ with $\sigma(y)\neq \sigma(x)$, or $P=xy\tilde x\tilde
  yx'$ for some vertex $\tilde x$ with $\sigma(\tilde x)=\sigma(x)$ and
  vertices $y,\tilde y$ with $\sigma(\tilde y)=\sigma(y)\neq\sigma(x)$.

  Suppose that the latter holds. In particular, $\tilde y$ is not adjacent
  to $x$. This, together with $\deg_G(x) \geq \deg_G(\tilde x)$ implies the
  existence of a vertex $y'$ such that $y'x\in E(G)$ but $y'\tilde x\notin
  E(G)$. On the other hand, $y'x'\notin E(G)$, because otherwise $xy'x'$ is
  a path between $x$ and $x'$ in $G$, contradicting the choice of $P$ as a
  shortest path between $x$ and $x'$. However, then $y'x\tilde y\tilde x
  yx'$ is an induced $P_6$ of $G$, contradicting the assumption that $G$ is
  $P_6$-free. Hence, this case cannot occur.

  Therefore, $P$ must be of the form $xyx'$. In particular, $y$ is adjacent
  to both $x$ and $x'$ in $G$, which concludes the proof.
\end{proof}

\begin{proposition}\label{pr:heartless}
  Let $(G,\sigma)$ be a connected bipartite graph. If $G$ contains no
  $P_6$, $C_6$, or $\text{Sunlet}_4$ as an induced subgraph, then $G$ has a
  heart-vertex.
\end{proposition}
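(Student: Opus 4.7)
The plan is as follows. I will take a vertex $x \in V(G)$ of maximum degree; say $x$ lies in color class $A$, with opposite class $B$. If $N(x) = B$, then $x$ is itself a heart-vertex and the conclusion holds; otherwise I fix $y^* \in B \setminus N(x)$. Since $G$ is connected, $P_6$-free, and $\sigma(x) \neq \sigma(y^*)$, the shortest path from $x$ to $y^*$ has length exactly $3$, providing an induced path $x - a - b - y^*$ with $a \in B$ and $b \in A$. Using $\deg(x) \geq \deg(b)$ together with $y^* \in N(b) \setminus N(x)$, I can extract $y_1 \in N(x) \setminus N(b)$. Replacing $a$ by the common neighbor $a^* \in N(x) \cap N(b)$ of \emph{maximum} degree (common neighbors exist by Lemma~\ref{lm:maxpr}), the subgraph $y_1 - x - a^* - b - y^*$ remains an induced $P_5$.

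The core of the argument will be a pair of ``propagation'' subclaims: \textbf{(a)} $N(y^*) \subseteq N(a^*) \cup \{b\}$, and \textbf{(b)} $N(y_1) \subseteq N(a^*) \cup \{x\}$ for every choice of $y_1 \in N(x) \setminus N(b)$. Both follow by examining how the induced $P_5$ could be extended at either end by a sixth vertex $c$: any $c \in N(y^*) \setminus (N(a^*) \cup \{b\})$ would yield an induced $P_6 = y_1 - x - a^* - b - y^* - c$ when $c \notin N(y_1)$ and an induced $C_6 = y_1 - x - a^* - b - y^* - c - y_1$ when $c \in N(y_1)$; an analogous dichotomy gives (b), where the case $c \in N(y^*)$ is absorbed into (a).

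The final step will show that $a^*$ is itself a heart-vertex. Suppose not, and pick $x_0 \in A \setminus N(a^*)$, necessarily with $x_0 \neq x, b$; subclaims (a) and (b) immediately force $y_1, y^* \notin N(x_0)$. Lemma~\ref{lm:maxpr} then produces a common neighbor $a' \in N(x) \cap N(x_0)$. If $a' \notin N(b)$, applying (b) with $a'$ in the role of $y_1$ gives $x_0 \in N(a^*) \cup \{x\}$, contradicting the choice of $x_0$. In the remaining case $a' \in N(b)$, $a'$ is a second common neighbor of $x$ and $b$, and the maximality of $|N(a^*)|$ produces $x_0' \in N(a^*) \setminus N(a')$; reapplying (a) and (b) now with $a'$ in place of $a^*$ shows that $x_0' \notin N(y_1) \cup N(y^*)$. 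The eight vertices $\{x, a^*, b, a', y_1, y^*, x_0, x_0'\}$ then induce the $4$-cycle $x - a^* - b - a' - x$ together with four private pendants (namely $y_1$ attached to $x$, $y^*$ to $b$, $x_0$ to $a'$, and $x_0'$ to $a^*$), which is precisely $\mathrm{Sunlet}_4$, contradicting the hypothesis.

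The main obstacle is this final case: $a^*$ must be chosen as the common neighbor of \emph{maximum} degree (rather than an arbitrary common neighbor) to guarantee the existence of $x_0'$, and the propagation subclaims must be invoked for \emph{both} $a^*$ and $a'$ so that all four pendants of the induced $\mathrm{Sunlet}_4$ are genuinely private.
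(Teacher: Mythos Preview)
Your proof is correct and takes a genuinely different route from the paper's. Both arguments start with a maximum-degree vertex and the induced $P_5$ arising from a shortest path, but they then diverge. The paper argues by global contradiction: assuming $G$ has no heart-vertex, it repeatedly invokes this assumption to introduce a growing cast of auxiliary vertices $x_2,x_3,x_4,y_3,y_4$ (with $y_3$ chosen of maximum degree among common neighbours of $x_0$ and a \emph{new} vertex $x_2$), and arrives at an induced $\mathrm{Sunlet}_4$ only after an extended case analysis. You instead pin down a specific candidate heart-vertex, namely $a^*$, the maximum-degree common neighbour of the two \emph{original} middle vertices $x$ and $b$, and prove directly that $a^*$ works. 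The key economy is your pair of reusable ``propagation'' subclaims (a) and (b): since they depend only on the induced-$P_5$ structure and not on the maximality of $a^*$, they apply equally to $a^*$ and to $a'$, which is what makes the final $\mathrm{Sunlet}_4$ come out cleanly with all four pendants private. The paper's argument uses the no-heart-vertex hypothesis three separate times; yours uses it only once (to produce $x_0$), which is why the resulting write-up is shorter and more transparent. One minor point worth making explicit in a final write-up: when you ``reapply (a) and (b) with $a'$ in place of $a^*$'', you are implicitly using that $y_1 - x - a' - b - y^*$ is again an induced $P_5$; this holds because $a' \in N(x)\cap N(b)$ in that case and $a' \neq y_1,y^*$, but stating it keeps the logic airtight.
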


\begin{proof}
  Assume that $G$ does not contain a heart-vertex.

  Let $x_0 \in V(G)$ be such that $\deg_G(x_0)$ is maximal among all
  elements in $V(G)$. Since $G$ has no heart-vertex, there exists a vertex
  $y_0$ with $\sigma(x_0)\neq \sigma(y_0)$ such that $x_0y_0\notin
  E(G)$. Let $P$ be a shortest path between $x_0$ and $y_0$ in $G$. Note
  that since $G$ is bipartite, $|V(P)|$ is even. Moreover, $G$ does not
  contain an induced $P_6$, so $|V(P)|<6$ must hold. Since $|V(P)|\neq 2$
  by the choice of $y_0$, this implies that only $|V(P)|=4$ is
  possible. Hence, $P=x_0y_1x_1y_0$ for some vertices $x_1,y_1$ with
  $\sigma(x_1)=\sigma(x_0)$ and $\sigma(y_1)=\sigma(y_0)$.

  Recall that, by the choice of $x_0$, $\deg(x_0) \geq \deg(x_1)$. On the
  other hand, $x_1y_1,x_0y_1,x_1y_0\in E(G)$ but $x_0y_0\notin E(G)$. This
  implies that there exists $y_2 \in V(G)$ such that $y_2$ is adjacent to
  $x_0$ and not to $x_1$. In particular, $y_2$ is distinct from $y_0$ and
  $y_1$, and we have $\sigma(y_2)=\sigma(y_0)$. Hence, $y_2x_0y_1x_1y_0$ is
  an induced $P_5$ in $G$, see Figure~\ref{fig:fpr}~(a).

  Since $G$ contains no heart-vertex, there is a vertex $x_2$ with
  $\sigma(x_2)=\sigma(x_0)$ such that $x_2y_1\notin E(G)$. In particular,
  $x_2$ is distinct from $x_0$ and $x_1$. From the maximality of the degree
  of $x_0$, Lemma~\ref{lm:maxpr} applies to $x_0$ and $x_2$, and there
  exists $y_3$ with $\sigma(y_3)\neq \sigma(x_0)$ such that
  $x_0y_3,x_2y_3\in E(G)$. In particular, $y_3\neq y_1$ and $y_3\neq y_0$
  since $x_2y_1\notin E(G)$ and $x_0y_0\notin E(G)$, respectively. Without
  loss of generality, we may now assume that $y_3$ has maximal degree among
  all vertices of $G$ that are adjacent to both $x_0$ and $x_2$.

  If $y_3x_1\notin E(G)$, then $\{x_2,y_3,x_0,y_1,x_1,y_0\}$ induces either
  a $P_6$ or $C_6$ in $G$ depending on whether $x_2$ and $y_0$ are adjacent
  or not. This contradicts our hypothesis. Hence, $y_3$ must be adjacent to
  $x_1$, and thus $y_3\neq y_2$. The same argument with $y_3$ replacing
  $y_2$ yields $x_2y_2 \notin E(G)$. Moreover, $x_2y_0\notin E(G)$, as
  otherwise, $y_2x_0y_1x_1y_0x_2$ is a $P_6$ in $G$. See
  Figure~\ref{fig:fpr}~(b) for the subgraph of $G$ induced by
  $\{x_0,x_1,x_2,y_0,y_1,y_2,y_3\}$.

  Since $G$ contains no heart-vertex, there exists a vertex $x_3$ such that
  $x_3y_3 \notin E(G)$. In particular, $x_3$ must be distinct from
  $x_0,x_1$ and $x_2$. If $x_3$ is adjacent to $y_0$ and not to $y_2$, then
  $y_2x_0y_3x_1y_0x_3$ is a $P_6$. If $x_3$ is adjacent to $y_2$ and not to
  $y_0$, then $x_3y_2x_0y_3x_1y_0$ is a $P_6$. Finally, if $x_3$ is
  adjacent to both $y_0$ and $y_2$, then the set
  $\{y_2,x_0,y_3,x_1,y_0,x_3\}$ induces a $C_6$ in $G$. In summary, $x_3$
  is adjacent to neither $y_0$ nor $y_2$. If $x_3$ is adjacent to $y_1$,
  then $\{x_0,x_1,x_2,x_3,y_0,y_1,y_2,y_3\}$ induces a Sunlet$_4$ in $G$.
  Therefore $x_3$ is not adjacent to $y_1$.

  From Lemma~\ref{lm:maxpr} and the maximality of the degree of $x_0$,
  there exists a vertex $y_4$ such that $\sigma(y_4) \neq \sigma(x_3)$ and
  $x_0y_4,x_3y_4 \in E(G)$. In view of the preceding arguments, $y_4$ is
  distinct from $y_0, y_1, y_2,$ and $y_3$. We show that $y_4x_1,y_4x_2\in
  E(G)$. Indeed, if $x_1y_4 \notin E(G)$, then $x_3y_4x_0y_1x_1y_0$ is a
  $P_6$ of $G$. Hence, $x_1y_4 \in E(G)$. Moreover, if $x_2y_4 \notin
  E(G)$, then $\{x_0,y_2,y_4,x_3,x_1,y_0,y_3,x_2\}$ induces a Sunlet$_4$ in
  $G$. Hence, $x_2y_4 \in E(G)$. Figure~\ref{fig:fpr}~(c) illustrates the
  subgraph of $G$ induced by $\{x_0,x_1,x_2,x_3,y_0,y_1,y_2,y_3,y_4\}$.

  Next, recall that by choice of $y_3$, $\deg_G(y_3) \geq \deg_G(y_4)$ must
  hold. Since $x_3y_4 \in E(G)$ and $x_3y_2 \notin E(G)$, this means that
  there exists $x_4 \in G$ with $\sigma(x_4) \neq \sigma(y_2)$ such that
  $x_4y_2 \in E(G)$ and $x_4y_4 \notin E(G)$. In particular, $x_4$ is
  distinct from $x_0,x_1,x_2$ and $x_3$, since all these vertices are
  adjacent to $y_4$ in $G$.

  Now, if $x_4y_0 \in E(G)$, then $x_3y_4x_2y_3x_4y_0$ is an induced $P_6$
  of $G$. Similarly, if $x_4y_2 \in E(G)$, then $x_3y_4x_2y_3x_4y_2$ is an
  induced $P_6$ of $G$. So, $x_4$ is adjacent in $G$ to neither $y_0$ nor
  $y_2$. The set $\{x_0,y_2,y_3,x_4,x_1,y_0,y_4,x_3\}$ therefore induces a
  Sunlet$_4$ in $G$.  Since the assumptions on $G$ exclude all these cases,
  $G$ must contain a heart-vertex.
\end{proof}

 \begin{figure}
   \centering
   \begin{tikzpicture}[scale=1.2]
     \draw (0.5,-0.5) node{\color{white} $\bullet$} node{$\circ$} node[left]{$y_2$} -- (1,-1) node{$\bullet$} node[left]{$x_0$} -- (1,-2) node{\color{white} $\bullet$} node{$\circ$} node[left]{$y_1$} -- (2,-2) node{$\bullet$} node[right]{$x_1$} -- (2.5,-2.5) node{\color{white} $\bullet$} node{$\circ$} node[right]{$y_0$};
     \draw (1.5,-3) node{(a)};

     \begin{scope}[xshift=3.5cm]
     \draw (0.5,-0.5) node{\color{white} $\bullet$} node{$\circ$} node[left]{$y_2$} -- (1,-1) node{$\bullet$} node[left]{$x_0$} -- (1,-2) node{\color{white} $\bullet$} node{$\circ$} node[left]{$y_1$} -- (2,-2) node{$\bullet$} node[right]{$x_1$} -- (2.5,-2.5) node{\color{white} $\bullet$} node{$\circ$} node[right]{$y_0$};
     \draw (2,-2) -- (2,-1) -- (1,-1);
     \draw (2,-1) node{\color{white} $\bullet$} node{$\circ$} node[right]{$y_3$} -- (2.5,-0.5) node{$\bullet$} node[right]{$x_2$};
     \draw (1.5,-3) node{(b)};
     \end{scope}

     \begin{scope}[xshift=7cm]
     \draw (0.5,-0.5) node{\color{white} $\bullet$} node{$\circ$} node[left]{$y_2$} -- (1,-1) node{$\bullet$} node[left]{$x_0$} -- (1,-2) node{\color{white} $\bullet$} node{$\circ$} node[left]{$y_1$} -- (2,-2) node{$\bullet$} node[right]{$x_1$} -- (2.5,-2.5) node{\color{white} $\bullet$} node{$\circ$} node[right]{$y_0$};
     \draw (2,-2) -- (2,-1) -- (1,-1);
     \draw (2,-1) node{\color{white} $\bullet$} node{$\circ$} node[right]{$y_3$} -- (2.5,-0.5) node{$\bullet$} node[right]{$x_2$} -- (3,0) -- (3.5,0.5) node{$\bullet$} node[right]{$x_3$};
     \draw (1,-1) -- (3,0) node{\color{white} $\bullet$} node{$\circ$} node[right]{$y_4$} -- (2,-2);
     \draw (1.5,-3) node{(c)};
     \end{scope}
     \end{tikzpicture}
     \caption{Three intermediate steps of the proof of Proposition~\ref{pr:heartless} - see text for details. Note that in all stages, the depicted graph is an induced subgraph of $G$.}
     \label{fig:fpr}
 \end{figure}
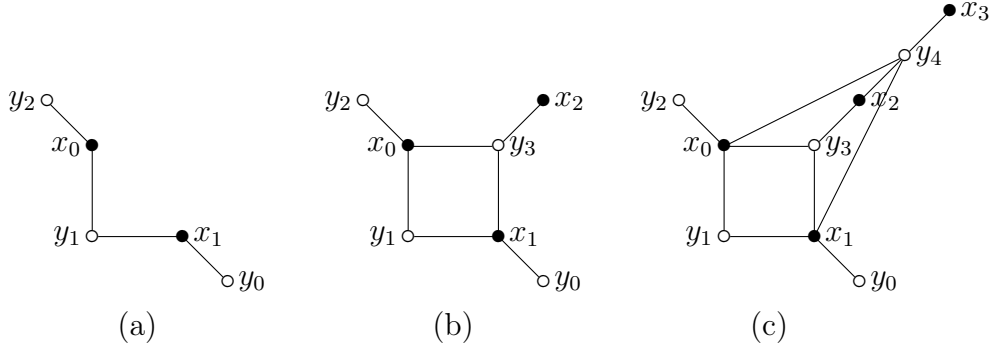

The latter result suggests a relationship with the class of \emph{bi-cographs}(bi-complement reducible graphs)~\cite{Giakoumakis:1997}, which is characterized by the three forbidden subgraphs: $P_7$, the star tree $S_{1,2,3}$ with branches of length $1$,$2$, and $3$, and $\mathrm{Sunlet}_4$. 
Together with Theorem~\ref{th:charh}, we obtain the following
characterization result, which partially resembles the characterization
result~\cite[Proposition 1]{Takaoka:23} for chordal bipartite
graphs. 2qBMGs as the
  $(P6,C_6)$-free bi-cographs. 

\begin{theorem}\label{th:5eq}
  Let $G$ be a graph. The following properties are equivalent.
\begin{itemize}
\item[(i)] $G$ is a un2qBMG.
\item[(ii)] $G$ is bipartite and all connected induced subgraphs of $G$
  contain a heart-vertex.
\item[(iii)] $G$ is bipartite and $(P_6, C_6,\text{Sunlet}_4)$-free.
\item[(iv)]$G$ is $(C_3, C_5, C_6,P_6,\text{Sunlet}_4)$-free.
\item[(v)] $G$ is chordal bipartite and $(P_6,\text{Sunlet}_4)$-free.
\item[(vi)] $G$ is a $(P_6,C_6)$-free bi-cograph.
\end{itemize}
\end{theorem}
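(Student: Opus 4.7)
The plan is to organize the six conditions into a structure where (iii) serves as the central hub. The backbone equivalence (i) $\Leftrightarrow$ (ii) is already available as Theorem~\ref{th:charh}, so the remaining work is to show (ii) $\Leftrightarrow$ (iii) and then connect (iii) to each of (iv), (v), and (vi) by elementary forbidden-subgraph reasoning.

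First I would tackle (ii) $\Leftrightarrow$ (iii). For (ii) $\Rightarrow$ (iii), a direct inspection suffices: each of $P_6$, $C_6$, and $\text{Sunlet}_4$ is connected and bipartite but contains no heart-vertex, because the maximum vertex degree in each of these graphs is strictly smaller than the size of the opposite color class (at most $2$ vs.\ $3$ for $P_6$ and $C_6$, and at most $3$ vs.\ $4$ for $\text{Sunlet}_4$). Since (ii) is hereditary under induced subgraphs, $G$ must be free of these three. For (iii) $\Rightarrow$ (ii), the property "bipartite and $(P_6, C_6, \text{Sunlet}_4)$-free" is itself hereditary, so every connected induced subgraph of $G$ satisfies the hypotheses of Proposition~\ref{pr:heartless} and therefore contains a heart-vertex.

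Next I would establish the remaining three equivalences. For (iii) $\Leftrightarrow$ (iv), the direction (iii) $\Rightarrow$ (iv) is immediate because bipartite graphs contain no odd cycles. For the converse, I would invoke the standard fact that the shortest odd cycle of a graph is induced, combined with the observation that any induced cycle of length at least $7$ contains an induced $P_6$ (take six consecutive vertices). Excluding induced $C_3$, $C_5$, and $P_6$ then rules out every induced odd cycle, hence every odd cycle, so $G$ is bipartite. For (iii) $\Leftrightarrow$ (v), I would use that "chordal bipartite" means "bipartite with no induced cycle of length at least $6$": in the presence of $P_6$-freeness (which already excludes induced $C_n$ for $n \geq 7$ by the same six-consecutive-vertices trick), the only additional restriction is the exclusion of induced $C_6$, so (iii) and (v) describe the same class. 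For (iii) $\Leftrightarrow$ (vi), I would cite the characterization of bi-cographs as bipartite graphs free of induced $P_7$, $S_{1,2,3}$, and $\text{Sunlet}_4$~\cite{Giakoumakis:1997}; since $P_6$-freeness implies both $P_7$-freeness and $S_{1,2,3}$-freeness (the two longest branches of $S_{1,2,3}$ together form an induced $P_6$), (iii) yields (vi), while (vi) conversely delivers bipartiteness and $\text{Sunlet}_4$-freeness from the definition, with $P_6$- and $C_6$-freeness supplied by hypothesis.

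The main potential obstacle is the implication (iv) $\Rightarrow$ bipartite, which is where some care is needed: one must make the step from "no induced odd cycle" to "no odd cycle at all" explicit so that $P_6$-freeness is seen to control all long induced cycles. Once that bridge is crossed, the rest of the theorem is an assembly of small-graph inspections and an appeal to the known forbidden-subgraph characterization of bi-cographs, which keeps the argument compact.
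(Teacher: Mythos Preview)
Your proposal is correct and follows essentially the same route as the paper: Theorem~\ref{th:charh} for (i)$\Leftrightarrow$(ii), the contrapositive inspection and Proposition~\ref{pr:heartless} for (ii)$\Leftrightarrow$(iii), and the bi-cograph forbidden-subgraph characterization for (iii)$\Leftrightarrow$(vi). The only difference is that the paper dispatches (iii)$\Leftrightarrow$(iv)$\Leftrightarrow$(v) by citing \cite[Proposition~1]{Takaoka:23}, whereas you spell out the elementary argument (shortest odd cycle is induced, long induced cycles contain $P_6$), which makes your version self-contained but is otherwise the same reasoning.
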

\begin{proof}
  The equivalence between (i) and (ii) is stated in Theorem~\ref{th:charh}.
  \cite[Proposition 1]{Takaoka:23} yields the equivalence
  between (iii), (iv), and (v), which is straightforward from the
  definition of chordal bipartite graphs and the characterization of
  bipartite graphs as free of induced odd-cycles.  
  
  We now proceed to show that (ii) and (iii) are equivalent. Assume first that (ii) holds. Then
  $G$ is $(P_6, C_6,\text{Sunlet}_4)$-free, because $P_6$, $C_6$ and
  $\text{Sunlet}_4$ contain no heart-vertex. Hence, (iii) is
  satisfied. Conversely, assume that (iii) holds. Let $H$ be a connected
  induced subgraph of $G$. Clearly, $H$ is bipartite and $(P_6,
  C_6,\text{Sunlet}_4)$-free. Applying Proposition~\ref{pr:heartless} to
  $H$ implies that $H$ has a heart-vertex. Hence, (ii) is satisfied. 
  
  Finally, we established the equivalence between (iii) and (vi). Assume first that (iii) holds. Recall that bi-cographs contain no $P_7, \,S_{1,2,3}$ and $\mathrm{Sunlet}_4$ as induced subgraphs. Hence, (vi) is satisfied, as $P_7$ and $S_{1,2,3}$ contain an induced $P_6$. Conversely, assume that (vi) is true. Clearly, (iii) is verified as bi-cographs are $\mathrm{Sunlet}_4$-free.
\end{proof}

Given that both bi-cographs~\cite{Giakoumakis:2003} and $(P_6,C_6)$-free graphs~\cite{quaddoura2024bipartite} admit linear-time recognition algorithms, Theorem~\ref{th:5eq} thus implies the following result.
  
\begin{corollary}
    un2qBMGs can be recognized in linear time.
\end{corollary}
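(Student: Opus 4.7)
The plan is to appeal directly to Theorem~\ref{th:5eq}, specifically to the equivalence between (i) and (vi), which identifies the class of un2qBMGs with the class of $(P_6,C_6)$-free bi-cographs. This reduces the recognition task to testing the conjunction of two properties (being a bi-cograph, and being $(P_6,C_6)$-free), each of which is already known to admit a linear-time recognition algorithm in the literature.

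Concretely, given an input graph $G$, I would first run the linear-time bi-cograph recognition algorithm of Giakoumakis and Vanherpe~\cite{Giakoumakis:2003}, rejecting immediately if $G$ is not a bi-cograph. Since every bi-cograph is bipartite, no separate bipartiteness test is required and the next subroutine receives a bipartite input. I would then invoke the linear-time recognition algorithm of Takaoka~\cite{Takaoka:23} (or equivalently that of Quaddoura~\cite{quaddoura2024bipartite}) to decide whether $G$ is $(P_6,C_6)$-free. If both subroutines accept, $G$ is a $(P_6,C_6)$-free bi-cograph and hence a un2qBMG by Theorem~\ref{th:5eq}(vi); otherwise $G$ fails at least one of the two required conditions and is not a un2qBMG.

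The running time is the sum of the two subroutines, each of which is $O(|V(G)|+|E(G)|)$, so the overall procedure remains linear. The only conceivable obstacle is that one must verify the cited recognition algorithms apply without preprocessing assumptions on $G$ (connectedness, given bipartition, etc.); this is addressed in the respective references, so no new analysis is needed and the corollary follows directly from Theorem~\ref{th:5eq}.
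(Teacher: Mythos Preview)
Your proposal is correct and takes essentially the same approach as the paper: the paper derives the corollary directly from Theorem~\ref{th:5eq}(vi) by combining the linear-time recognition of bi-cographs~\cite{Giakoumakis:2003} with the linear-time recognition of $(P_6,C_6)$-free bipartite graphs~\cite{quaddoura2024bipartite,Takaoka:23}. Your explicit ordering (test bi-cograph first, so bipartiteness is guaranteed before invoking the $(P_6,C_6)$-free test) is a sensible way to make the argument precise, but conceptually it is the same one-line observation the paper makes.
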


\section{Concluding Remarks}
In summary, we have provided a comprehensive characterization of undirected
2-quasi-best match graphs (un2qBMGs). Specifically, we established (i) a
structural characterization of un2qBMGs by means of labeled trees
$(T,\sigma,u)$; (ii) a cubic-time recognition algorithm that relies on the
detection of heart-vertices; (iii) an equivalent characterization as the
bipartite graphs in which all connected induced subgraphs have a
heart-vertex; and (iv) a forbidden subgraph characterization description as
the $(P_6,C_6,\mathrm{Sunlet}_4)$-free bipartite graphs. Furthermore, we
showed that un2qBMGs coincide with the $(P_6,C_6)$-free bi-cographs, which
implies the existence of a linear-time recognition algorithm for this
class. Although Algorithm~\ref{alg:main} requires $O(|V(G)|^3)$ time, it is
nevertheless interesting in practice because it explicitly constructs for
each un2qBMG a least-resolved tree $(T,\sigma,u)$.

An interesting direction for future work is to explore whether the linear
decomposition-based recognition algorithms for bi-cographs and
$(P_6,C_6)$-free graphs can be adapted, following the approach
of~\cite{quaddoura2024bipartite}, to recognize un2qBMGs directly. Since
this approach is based on building an (inner node-labeled) decomposition
tree, it would also be interesting to investigate if the resulting
decomposition tree can be used to construct the least-resolved explaining
the tree more efficiently. Such an approach has been successfully used to
build the phylogenetic tree explaining a Fitch graph from di-cographs in
linear time in~\cite{Gei__2018}.

The family of un2qBMGs properly contains several known graph classes. For
example, the bi-quasi-threshold graphs coincide with the domino-free
un2qBMGs~\cite[Theorem~1]{Alecu:2020}. Moreover, according to the online
database \texttt{https://www.graphclasses.org}, the bipartite bi-threshold
graphs~\cite{Hammer:93} form a proper subclass of the un2qBMGs.

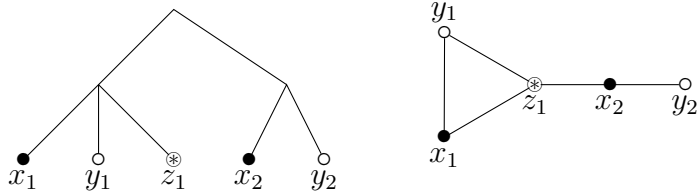
\begin{figure}
  \centering
  \begin{tikzpicture}
    \draw (0,0) node{$\bullet$} node[below]{$x_1$} -- (1,1) -- (2,0) node{\color{white} $\bullet$} node[scale=0.7]{$\circledast$} node[below]{$z_1$};
    \draw (1,0) node{\color{white} $\bullet$} node{$\circ$} node[below]{$y_1$} -- (1,1);
    \draw (3,0) node{$\bullet$} node[below]{$x_2$} -- (3.5,1) -- (4,0) node{\color{white} $\bullet$} node{$\circ$} node[below]{$y_2$};
    \draw (1,1) -- (2,2) -- (3.5,1);

    \begin{scope}[xshift=6cm, yshift=1cm]
    \draw (0:0.8)  node[below]{$z_1$} -- (120:0.8) node{\color{white} $\bullet$} node{$\circ$} node[above]{$y_1$} -- (240:0.81) node{$\bullet$} node[below]{$x_1$} -- cycle;
    \draw (0:0.8) node{\color{white} $\bullet$} node[scale=0.7]{$\circledast$} -- ++(1,0) node{$\bullet$} node[below]{$x_2$} -- ++(1,0) node{\color{white} $\bullet$} node{$\circ$} node[below]{$y_2$};
    \end{scope}
  \end{tikzpicture}
  \caption{Undirected underlying graph of qBMG on three colors, which has no
    heart-vertices.}
  \label{fig:enter-label}
\end{figure}

Clique-width is an important parameter in algorithmic graph theory
since many problems that are NP-hard in general, such a graph
  coloring, become tractable when restricted to graphs of bounded
clique-width~\cite{Alecu:2020}.
Since bi-cographs are known to have bounded clique-width
\cite{Alecu:2020}, un2qBMGs also have bounded clique-width as an
  immediate consequence of Theorem~\ref{th:5eq}(vi). An interesting for
  future work, thus, is to explore the computational complexity of problems
  that remain NP-hard on bi-cographs and $(P_6,C_6)$-free graphs, but may
  become tractable by polynomial-time algorithms when restricted to the
  class of un2qBMGs.
 
Although our results provide a comprehensive understanding of undirected
2-quasi-best match graphs; they do not directly extend to cases with more
than two colors. In particular, the example in Figure~\ref{fig:enter-label}
shows that there is no natural extension of the notion of a heart-vertex to
multipartite graphs such that Lemma~\ref{lm:vheart} holds for three
colors. Consequently, a generalization of Theorem~\ref{th:charh} to three
or more colors appears to be unlikely. Nevertheless, our results yield an
easily testable necessary condition for more than two colors, as every
subgraph of an underlying induced subgraph of a qBMG (unqBMG) induced by a
pair of color classes must be a un2qBMG. Since the least-resolved
explaining trees of a qBMG~\cite[Figure 4]{korchmaros2023quasi}, and thus
also of an unqBMG, are not necessarily unique, it remains an open problem
for future research whether, and if so, how explaining trees for the
un2qBMGs of pairs of color classes can be combined to a labeled tree that
explains an unqBMG. The recognition problem for unqBMGs with three or more
colors, therefore, remains open.

\section*{Acknowledgements}
\noindent
This work was supported in part by the German Research Foundation (DFG,
grant \# 432974470).  Research in the Stadler lab is supported by the
German Federal Ministry of Education and Research (BMBF) through the DAAD
project~57616814 (SECAI, School of Embedded Composite AI) and, jointly with
Tourismus, within the program \emph{Center of Excellence for AI Research
``Center for Scalable Data Analytics and Artificial Intelligence
Dresden/Leipzig''} (ScaDS.AI).

\bibliographystyle{plain}
\bibliography{refs}

\begin{thebibliography}{10}

\bibitem{Alecu:2020}
Bogdan Alecu, Mamadou~Moustapha Kant{\'e}, Vadim Lozin, and Viktor Zamaraev.
\newblock Between clique-width and linear clique-width of bipartite graphs.
\newblock {\em Discrete Mathematics}, 343(8):111926, 2020.

\bibitem{CLRS2009}
Thomas~H. Cormen, Charles~E. Leiserson, Ronald~L. Rivest, and Clifford Stein.
\newblock {\em Introduction to Algorithms}.
\newblock MIT Press, Cambridge, MA, 3 edition, 2009.

\bibitem{fouquet1999bipartite}
Jean-Luc Fouquet, Vassilis Giakoumakis, and Jean-Marie Vanherpe.
\newblock Bipartite graphs totally decomposable by canonical decomposition.
\newblock {\em International Journal of Foundations of Computer Science}, 10(04):513--533, 1999.

\bibitem{geiss2019best}
Manuela Gei{\ss}, Edgar Ch{\'a}vez, Marcos Gonz{\'a}lez~Laffitte, Alitzel L{\'o}pez~S{\'a}nchez, B{\"a}rbel~MR Stadler, Dulce~I Valdivia, Marc Hellmuth, Maribel Hern{\'a}ndez~Rosales, and Peter~F Stadler.
\newblock Best match graphs.
\newblock {\em Journal of mathematical biology}, 78(7):2015--2057, 2019.

\bibitem{Gei__2018}
Manuela Geiß, John Anders, Peter~F. Stadler, Nicolas Wieseke, and Marc Hellmuth.
\newblock Reconstructing gene trees from fitch’s xenology relation.
\newblock {\em Journal of Mathematical Biology}, 77(5):1459–1491, 2018.

\bibitem{Giakoumakis:1997}
Vassilis Giakoumakis and Jean~Marie Vanherpe.
\newblock Bicomplement reducible graphs.
\newblock {\em Advances Applied Mathematics}, 18:389--402, 1997.

\bibitem{Giakoumakis:2003}
Vassilis Giakoumakis and Jean~Marie Vanherpe.
\newblock Linear time recognition and optimizations for weak-bisplit graphs, bi-cographs and bipartite {$P_6$}-free graphs.
\newblock {\em Intl. J. Foundations Comp. Sci.}, 14:107--136, 2003.

\bibitem{Hammer:93}
Peter~L. Hammer, Mahadev N.~V. R., and Uri N.~Peled Peled.
\newblock Bipartite bithreshold graphs.
\newblock {\em Discrete Mathematics}, 119(1):79--96, 1993.

\bibitem{hellmuth2020complexity}
Marc Hellmuth, Manuela Gei{\ss}, and Peter~F Stadler.
\newblock Complexity of modification problems for reciprocal best match graphs.
\newblock {\em Theoretical Computer Science}, 809, 2020.

\bibitem{Korchmaros2026}
Annachiara Korchmaros.
\newblock Automorphisms and quotients of 2-colored quasi best match graphs.
\newblock {\em Art of Discrete and Applied Mathematics}, 9:1.06, 2026.

\bibitem{korchmaros2023quasi}
Annachiara Korchmaros, David Schaller, Marc Hellmuth, and Peter~F Stadler.
\newblock Quasi-best match graphs.
\newblock {\em Discrete Applied Mathematics}, 331:104--125, 2023.

\bibitem{korchmaros2025forbidden}
Annachiara Korchmaros and Peter~F Stadler.
\newblock Forbidden configurations and dominating bicliques in undirected 2-quasi best match graphs.
\newblock {\em arXiv preprint arXiv:2502.11791}, 2025.

\bibitem{quaddoura2024bipartite}
Ruzayn Quaddoura and Ahmad Al-Qerem.
\newblock Bipartite ({P}$_6$,{C}$_6$)-free graphs: Recognition and optimization problems.
\newblock {\em Symmetry}, 16(4):447, 2024.

\bibitem{ramirez2024revolutionh}
Jos{\'e}~Antonio Ram{\'\i}rez-Rafael, Annachiara Korchmaros, Katia Avi{\~n}a-Padilla, Alitzel L{\'o}pez~S{\'a}nchez, Andrea~Arlette Espa{\~n}a-Tinajero, Marc Hellmuth, Peter~F Stadler, and Maribel Hern{\'a}ndez-Rosales.
\newblock Revolutionh-tl: Reconstruction of evolutionary histories tool.
\newblock In {\em RECOMB International Workshop on Comparative Genomics}, pages 89--109. Springer, 2024.

\bibitem{ramirez2025revolutionh}
Jos{\'e}~Antonio Ram{\'\i}rez-Rafael, Annachiara Korchmaros, Katia Avi{\~n}a-Padilla, Alitzel L{\'o}pez-S{\'a}nchez, Gabriel Martinez-Medina, Alfredo~J Hern{\'a}ndez-{\'A}lvarez, Marc Hellmuth, Peter~F Stadler, and Maribel Hern{\'a}ndez-Rosales.
\newblock Revolutionh-tl: A fast and robust tool for decoding evolutionary gene histories.
\newblock {\em bioRxiv}, pages 2025--04, 2025.

\bibitem{schaller2021corrigendum}
David Schaller, Manuela Gei{\ss}, Edgar Ch{\'a}vez, Marcos Gonz{\'a}lez~Laffitte, Alitzel L{\'o}pez~S{\'a}nchez, B{\"a}rbel~MR Stadler, Dulce~I Valdivia, Marc Hellmuth, Maribel Hern{\'a}ndez~Rosales, and Peter~F Stadler.
\newblock Corrigendum to “best match graphs”.
\newblock {\em Journal of Mathematical Biology}, 82(6):47, 2021.

\bibitem{schaller2021best}
David Schaller, Manuela Gei{\ss}, Marc Hellmuth, and Peter~F Stadler.
\newblock Best match graphs with binary trees.
\newblock In {\em International Conference on Algorithms for Computational Biology}, pages 82--93. Springer, 2021.

\bibitem{schaller2021heuristic}
David Schaller, Manuela Gei{\ss}, Marc Hellmuth, and Peter~F Stadler.
\newblock Heuristic algorithms for best match graph editing.
\newblock {\em Algorithms for Molecular Biology}, 16(1):19, 2021.

\bibitem{schaller2021complexity}
David Schaller, Peter~F Stadler, and Marc Hellmuth.
\newblock Complexity of modification problems for best match graphs.
\newblock {\em Theoretical Computer Science}, 865, 2021.

\bibitem{Takaoka:23}
Asahi Takaoka.
\newblock Decomposition of {$P_6$}-free chordal bipartite graphs.
\newblock {\em IEICE Trans. Fund. Electronics, Commun., Comp. Sci.}, E106.A(11):1436--1439, 2023.

\end{thebibliography}

\end{document}